\documentclass[12pt]{amsart}

\usepackage{amssymb}
\usepackage{fullpage}
\usepackage{ifthen}

\usepackage{color,soul}
\usepackage[dvipsnames]{xcolor}

\usepackage{tikz,tikz-3dplot}
\usepackage{graphicx}
\usepackage{float}

\usepackage{bm}
\usepackage{amsmath,mathtools}
 
\usepackage[normalem]{ulem}

\usepackage{mathdots}

\usepackage{url}

\newboolean{workmode}
\setboolean{workmode}{false}

\newcommand{\ifwork}[1]{\ifthenelse{\boolean{workmode}}{#1}{}}
\newcommand{\comment}[1]{}
\newcommand{\ynote}[1]{}
\newcommand{\mute}[2]{}
\newcommand{\printname}[1]{}

\ifwork{
  \renewcommand{\comment}[1]{{ \color{magenta}  \ \scriptsize{#1}\ }}
  \renewcommand{\ynote}[1]{{ \color{magenta}  \ \scriptsize{#1}\ }}
\renewcommand{\mute}[2]{{\scriptsize \ #1\ }\marginpar{\scriptsize muted}}
\renewcommand{\printname}[1]
    {{\makebox[0pt]{\hspace{-1.0in}\raisebox{8pt}{\tiny #1}}}}
}

\newcommand{\labell}[1] { \printname{#1} \label{#1} }

\numberwithin{equation}{section}

\theoremstyle{plain}

\newtheorem{Theorem}[equation]{Theorem}
\newtheorem{Proposition}[equation]{Proposition}

\newtheorem{Corollary}[equation]{Corollary}
\newtheorem{Lemma}[equation]{Lemma}

\theoremstyle{definition}
\newtheorem{Definition}[equation]{Definition}
\newtheorem{Notation}[equation]{Notation}

\theoremstyle{remark}
\newtheorem{Remark}[equation]{Remark}

\def \R {{\mathbb R}}
\def \Z {{\mathbb Z}}
\def \N {{\mathbb N}}
\def \del {\partial}
\def \ds {\displaystyle}
\DeclareMathOperator \GL {GL}

\DeclareMathOperator \oC {\mathring{C}}

\begin{document}

\title{A Pick-type theorem for halfway-lattice polygons}
\date{\today}

\author{Charlene Chu}
\email{char.chu@alumni.utoronto.ca}

\author{Yael Karshon}
\email{yaelkarshon@tauex.tau.ac.il}
\address{Tel-Aviv University, and the University of Toronto}

\begin{abstract}
Given a polygon in the plane with vertices on lattice points,
Pick's theorem expresses its area $A$ in terms of the number $I$ 
of lattice points in its interior and the number $B$ of lattice points 
in its boundary: $\ds A = I + \frac{B}{2} - 1$.  
We consider polygons in the plane whose edges are halfway between 
lattice points.   For such a polygon that is also unimodular,
we express its area $A$ in terms of the number $I$ of lattice points
in its interior and the number $V$ of its vertices: 
$\ds A = I - \frac{V}{8} + \frac{1}{2}$.
\end{abstract}

\subjclass[2020]{Primary 52B20; Secondary 53D50}

\maketitle

\setcounter{tocdepth}{1}
\tableofcontents

\section{Introduction}
\labell{sec:intro}

Given a polygon in the plane with vertices on lattice points,
Pick's theorem expresses its area $A$ in terms of 
the number $I$ of lattice points in its interior
and the number $B$ of lattice points in its boundary:
$\ds A = I + \frac{B}{2} - 1$.  
For example, the pentagon in Figure~\ref{fig:pentagon}
has an area of $\ds {\color{red} 7} + \frac{\color{blue}8}{2} - 1 = 10$.

\begin{figure}[H]
\centering
\begin{tikzpicture}
    \coordinate (A) at (1,1);
    \coordinate (B) at (3,1);
    \coordinate (C) at (4,3);
    \coordinate (D) at (2,5);
    \coordinate (E) at (0,3);
    \foreach \x in {-1,0,...,5}{
      \foreach \y in {0,1,...,6}{
        \node[draw,circle,inner sep=1pt,fill] at (1*\x,1*\y) {};
      }
    }
    \draw [,-,fill=gray!50, opacity=0.25] (A) -- (B) -- (C) -- (D) -- (E) -- (A) node [] {};
    \draw [thick,-,black] (A) -- (B) -- (C) -- (D) -- (E) -- (A) node [] {};
        \node[draw=red,circle,inner sep=1pt,fill=red] at (1,2) {};
        \node[draw=red,circle,inner sep=1pt,fill=red] at (2,2) {};
        \node[draw=red,circle,inner sep=1pt,fill=red] at (3,2) {};
        \node[draw=red,circle,inner sep=1pt,fill=red] at (1,3) {};
        \node[draw=red,circle,inner sep=1pt,fill=red] at (2,3) {};
        \node[draw=red,circle,inner sep=1pt,fill=red] at (3,3) {};
        \node[draw=red,circle,inner sep=1pt,fill=red] at (2,4) {};
        \node[draw=blue,circle,inner sep=1pt,fill=blue] at (1,1) {};
        \node[draw=blue,circle,inner sep=1pt,fill=blue] at (2,1) {};
        \node[draw=blue,circle,inner sep=1pt,fill=blue] at (3,1) {};
        \node[draw=blue,circle,inner sep=1pt,fill=blue] at (4,3) {};
        \node[draw=blue,circle,inner sep=1pt,fill=blue] at (3,4) {};
        \node[draw=blue,circle,inner sep=1pt,fill=blue] at (2,5) {};
        \node[draw=blue,circle,inner sep=1pt,fill=blue] at (1,4) {};
        \node[draw=blue,circle,inner sep=1pt,fill=blue] at (0,3) {};
\end{tikzpicture}
\caption{Pick's theorem \comment{Charlene's Figure 1}} \label{fig:pentagon}
\end{figure}

A proof of Pick's Theorem can be found in \cite[Section~2.6]{beck-robins}. 
For Pick's original paper, see \cite{pick}.

We consider polygons whose edges are halfway between lattice points.
For such a polygon that is also \emph{unimodular},
we express its area $A$ in terms of the number $I$
of lattice points in its interior and the number $V$ of its vertices:
$A = I - \frac{V}{8} + \frac{1}{2}$.
See Theorem~\ref{thm:mainTheorem}.
For example, in Figure~\ref{fig:rectangle-and-octagon},
the rectangle has an area of 
$A = {\color{red}10} 
 - \dfrac{\color{ForestGreen}4}{8} + \dfrac{1}{2} = 10$,
and the octagon has an area of 
$A = {\color{red}21} - \dfrac{\color{ForestGreen}8}{8} + \dfrac{1}{2} 
= 20 \dfrac{1}{2}$.  

{
\begin{figure}[H]
  \begin{tikzpicture}
  
    \coordinate (A) at (0.5,0.5);
    \coordinate (B) at (2.5,0.5);
    \coordinate (C) at (2.5,5.5);
    \coordinate (D) at (0.5,5.5);

    \coordinate (A1) at (0.6,0.5);
    \coordinate (A2) at (0.5,0.6);
    
    \coordinate (B1) at (2.5,0.6);
    \coordinate (B2) at (2.4,0.5);
    
    \coordinate (C1) at (2.4,5.5);
    \coordinate (C2) at (2.5,5.4);
    
    \coordinate (D1) at (0.6,5.5);
    \coordinate (D2) at (0.5,5.4);

    \foreach \x in {0,1,...,3}{
      \foreach \y in {0,1,...,6}{
        \node[draw,circle,inner sep=1pt,fill] at (1*\x,1*\y) {};

        \node[draw=red,circle,inner sep=1pt,fill=red] at (1,1) {};
        \node[draw=red,circle,inner sep=1pt,fill=red] at (2,1) {};
        \node[draw=red,circle,inner sep=1pt,fill=red] at (1,2) {};
        \node[draw=red,circle,inner sep=1pt,fill=red] at (2,2) {};
        \node[draw=red,circle,inner sep=1pt,fill=red] at (1,3) {};
        \node[draw=red,circle,inner sep=1pt,fill=red] at (2,3) {};
        \node[draw=red,circle,inner sep=1pt,fill=red] at (1,4) {};
        \node[draw=red,circle,inner sep=1pt,fill=red] at (2,4) {};
        \node[draw=red,circle,inner sep=1pt,fill=red] at (1,5) {};
        \node[draw=red,circle,inner sep=1pt,fill=red] at (2,5) {};

      }
    }
    \draw [,-,fill=gray!50, opacity=0.25,gray!50, opacity=0.25] (A) -- (B) -- (C) -- (D) -- (A)--cycle;
    \draw [thick,-,black] (A)
        -- (B) -- (C) -- (D) -- (A) node [above left] {};
        
    \draw [ultra thick,-,ForestGreen] (A1) -- (A) -- (A2) node [above left] {};
    \draw [ultra thick,-,ForestGreen] (B1) -- (B) -- (B2) node [above left] {};
    \draw [ultra thick,-,ForestGreen] (C1) -- (C) -- (C2) node [above left] {};
    \draw [ultra thick,-,ForestGreen] (D1) -- (D) -- (D2) node [above left] {};
  \end{tikzpicture}
\qquad \qquad
%
%
    \begin{tikzpicture}
    
    \coordinate (A) at (2,0.5);
    \coordinate (B) at (4,0.5);
    \coordinate (C) at (5.5,2);
    \coordinate (D) at (5.5,4);
    \coordinate (E) at (4,5.5);
    \coordinate (F) at (2,5.5);
    \coordinate (G) at (0.5,4);
    \coordinate (H) at (0.5,2);
        
    \coordinate (A1) at (2.1,0.5);
    \coordinate (B1) at (4.07,0.57);
    \coordinate (C1) at (5.5,2.1);
    \coordinate (D1) at (5.43,4.07);
    \coordinate (E1) at (3.9,5.5);
    \coordinate (F1) at (1.93,5.43);
    \coordinate (G1) at (0.5,3.9);
    \coordinate (H1) at (0.57,1.93);
    
    \coordinate (A2) at (1.93,0.57);
    \coordinate (B2) at (3.9,0.5);
    \coordinate (C2) at (5.43,1.93);
    \coordinate (D2) at (5.5,3.9);
    \coordinate (E2) at (4.07,5.43);
    \coordinate (F2) at (2.1,5.5);
    \coordinate (G2) at (0.57,4.07);
    \coordinate (H2) at (0.5,2.1);
    
    \foreach \x in {0,1,...,6}{
      \foreach \y in {0,1,...,6}{
        \node[draw,circle,inner sep=1pt,fill] at (1*\x,1*\y) {};
            
        \node[draw=red,circle,inner sep=1pt,fill=red] at (2,1) {};
        \node[draw=red,circle,inner sep=1pt,fill=red] at (3,1) {};
        \node[draw=red,circle,inner sep=1pt,fill=red] at (4,1) {};
        
        \node[draw=red,circle,inner sep=1pt,fill=red] at (1,2) {};
        \node[draw=red,circle,inner sep=1pt,fill=red] at (2,2) {};
        \node[draw=red,circle,inner sep=1pt,fill=red] at (3,2) {};
        \node[draw=red,circle,inner sep=1pt,fill=red] at (4,2) {};
        \node[draw=red,circle,inner sep=1pt,fill=red] at (5,2) {};
        
        \node[draw=red,circle,inner sep=1pt,fill=red] at (1,3) {};
        \node[draw=red,circle,inner sep=1pt,fill=red] at (2,3) {};
        \node[draw=red,circle,inner sep=1pt,fill=red] at (3,3) {};
        \node[draw=red,circle,inner sep=1pt,fill=red] at (4,3) {};
        \node[draw=red,circle,inner sep=1pt,fill=red] at (5,3) {};
        
        \node[draw=red,circle,inner sep=1pt,fill=red] at (1,4) {};
        \node[draw=red,circle,inner sep=1pt,fill=red] at (2,4) {};
        \node[draw=red,circle,inner sep=1pt,fill=red] at (3,4) {};
        \node[draw=red,circle,inner sep=1pt,fill=red] at (4,4) {};
        \node[draw=red,circle,inner sep=1pt,fill=red] at (5,4) {};
        
        \node[draw=red,circle,inner sep=1pt,fill=red] at (2,5) {};
        \node[draw=red,circle,inner sep=1pt,fill=red] at (3,5) {};
        \node[draw=red,circle,inner sep=1pt,fill=red] at (4,5) {};
        
      }
    }
    \draw [,-,fill=gray!50, opacity=0.25,gray!50, opacity=0.25] (A) 
        -- (B) -- (C) -- (D) -- (E) --(F) -- (G)--(H)--(A)--cycle;
    \draw [thick,-,black] (A)
        -- (B) -- (C) -- (D) -- (E) --(F) -- (G)--(H)--(A) node [] {};
        
    \draw [ultra thick,-,ForestGreen] (A1) -- (A) -- (A2) node [] {};
    \draw [ultra thick,-,ForestGreen] (B1) -- (B) -- (B2) node [] {};
    \draw [ultra thick,-,ForestGreen] (C1) -- (C) -- (C2) node [] {};
    \draw [ultra thick,-,ForestGreen] (D1) -- (D) -- (D2) node [] {};
    \draw [ultra thick,-,ForestGreen] (E1) -- (E) -- (E2) node [] {};
    \draw [ultra thick,-,ForestGreen] (F1) -- (F) -- (F2) node [] {};
    \draw [ultra thick,-,ForestGreen] (G1) -- (G) -- (G2) node [] {};
    \draw [ultra thick,-,ForestGreen] (H1) -- (H) -- (H2) node [] {};
        
    \end{tikzpicture}
\caption{Our Pick-type theorem \comment{Charlene's Figure 16+17}}
\label{fig:rectangle-and-octagon} 
\end{figure}
}

Being unimodular means that adjacent edges are generated
by vectors that form a basis to the lattice;
equivalently, their inner normals can be chosen to be a basis
to the (dual) lattice.
For details, see Section~\ref{sec:polygons}.

For polygons that are not unimodular, the situation is more complicated.
For example, the two kites in Figure~\ref{fig:kites-and-diamonds} on the left 
are non-unimodular.
They have the same area and the same primitive inner normal vectors,
but the numbers of lattice points in the interior are different:
Kite $A$ has two lattice points in its interior
while Kite $B$ has no lattice points in its interior.
A similar example can be seen with the two diamonds 
in Figure~\ref{fig:kites-and-diamonds} on the right.
From these examples we conclude that an analogous formula
in the non-unimodular case cannot only involve the primitive inner
normal vectors and the number of lattice points in the interior.

\begin{figure}[H]
\centering
  \begin{tikzpicture}
  
    \coordinate (A) at (1,1.5);
    \coordinate (B) at (1.5,0);
    \coordinate (C) at (2,1.5);
    \coordinate (D) at (1.5,2);
    
    \coordinate (E) at (-0.5,2);
    \coordinate (F) at (0,0.5);
    \coordinate (G) at (0.5,2);
    \coordinate (H) at (0,2.5);

    \foreach \x in {-2,-1,...,3}{
      \foreach \y in {0,1,...,3}{
        \node[draw,circle,inner sep=1pt,fill] at (1*\x,1*\y) {};
        
      }
    }
    
    \draw [,-,fill=gray!50, opacity=0.25,gray!50, opacity=0.25] (A) 
        -- (B) -- (C) -- (D) -- (A) -- cycle;
    \draw [thick,-,black] (C) -- (D) -- (A) -- (B) -- (C) -- cycle node [right] {$B$};
    \draw [,-,fill=gray!50, opacity=0.25,gray!50, opacity=0.25] (E) 
        -- (F) -- (G) -- (H) -- (E) -- cycle;
    \draw [thick,-,black] (E) -- (F) -- (G) -- (H) -- (E) --cycle node [below left] {$A$};

  \end{tikzpicture}
\qquad \qquad \qquad 
{
  \begin{tikzpicture}
  
    \coordinate (A) at (1,1.5);
    \coordinate (B) at (1.5,1);
    \coordinate (C) at (2,1.5);
    \coordinate (D) at (1.5,2);

    \coordinate (E) at (-0.5,2);
    \coordinate (F) at (0,1.5);
    \coordinate (G) at (0.5,2);
    \coordinate (H) at (0,2.5);

    \foreach \x in {-2,-1,...,3}{
      \foreach \y in {0,1,...,3}{
        \node[draw,circle,inner sep=1pt,fill] at (1*\x,1*\y) {};
      }
    }
    
    \draw [,-,fill=gray!50, opacity=0.25,gray!50, opacity=0.25] (A) 
        -- (B) -- (C) -- (D) -- (A) -- cycle;
    \draw [thick,-,black] (C) -- (D) -- (A) -- (B) -- (C) -- cycle node [right] {$B$};
    \draw [,-,fill=gray!50, opacity=0.25,gray!50, opacity=0.25] (E) 
        -- (F) -- (G) -- (H) -- (E) -- cycle;
    \draw [thick,-,black] (E) -- (F) -- (G) -- (H) -- (E) --cycle node [below left] {$A$};

\end{tikzpicture}
}
\caption{Non-unimodular quadrilaterals \comment{Charlene's Figures 18 and 19}} 
\label{fig:kites-and-diamonds}
\end{figure}

\bigskip

\begin{Remark}
The motivation for considering halfway-lattice polygons
(or higher dimensional halfway-lattice polytopes)
comes from geometric quantization.
Unimodular lattice polytopes occur as momentum map images
of symplectic toric manifolds that are pre-quantizable
in the classical sense of Kostant and Souriau.
%
%
Unimodular halfway-lattice polytopes occur as momentum map images 
of symplectic toric manifold that are pre-quantizable
in the \mbox{spin-c} or metaplectic-c sense \cite{grossberg-karshon,vaughan}.
%

For nice accounts of symplectic toric manifolds,
we suggest Audin's book \cite{audin}, and, in real dimension four, 
Camarneiro's MSc thesis \cite{camarneiro-CdS-thesis}.
 
We expect Euler-Maclaurin formulas for polytopes
\cite{khovanskii-pukhlikov,guillemin,brion-vergne}
(also see \cite{karshon-sternberg-weitsman-PNAS, 
karshon-sternberg-weitsman-remainder,
karshon-sternberg-weitsman-exact})
to have analogues for halfway-lattice polytopes
in which the Todd function is replaced by the A-roof function.
In the unimodular case, this was recently confirmed 
by Duarte and Caviedes Castro \cite{duarte-ACC}.
\end{Remark}

\subsection*{Acknowledgements}
This paper evolved from an M.Sc.\ project of Charlene Chu 
under the supervision of Yael Karshon at the University of Toronto.
An earlier version appeared as a research poster
for which Chu received the Best Poster Presentation Award
at the 
University of Toronto Mississauga 2016 Graduate Research Colloquium.

We would like to thank Sinai Robins and Aleksandr Kolpakov
for helpful discussions when this work was carried out.
Karshon would like to thank Ana Cannas da Silva, Alexander Caviedes Castro, 
and Oded Elisha for more recent helpful discussions.

This research was partially funded by the Natural Science and Engineering
Research Council of Canada.

\section{Unimodular halfway-lattice polygons}
\labell{sec:polygons}

A \textbf{polygon} is a subset of $\R^2$ 
that is a convex hull of finitely many points
and whose interior is non-empty.
Equivalently, it is a bounded subset of $\R^2$ with non-empty interior
that can be written in the form
\begin{equation} \labell{intersection}
 \{ x \in \R^2 \ | \ \left< x , v_i \right> + \lambda_i \geq 0
 \text{ for $i=1,\ldots,n$ } \} 
\end{equation}
for some non-zero vectors $v_1,\ldots,v_n \in \R^2$ 
and numbers $\lambda_1,\ldots,\lambda_n \in \R$.
This equation expresses the polygon as an intersection 
of the half-planes 
$H_j = \{ x \in \R^2 \ | \ \left< x,v_i \right> + \lambda_i \geq 0 \}$.
Such an expression for which $n$ is minimal is unique up to a permutation
of the half-planes.  
The \textbf{edges} of the polygon are then its intersection with the lines
$\{ \left< x, v_i \right> + \lambda_i = 0 \}$,
and the $v_i$ are \emph{inner normal vectors} to the edges.
We can choose the indexing so that the edges are consecutive
and in counterclockwise order; the indexing is then unique
up to a cyclic permutation.  
The polygon is \textbf{rational} if the slopes of its edges are rational;
equivalently, if the inner normal vectors $v_i$ 
can be chosen to be in the lattice $\Z^2$.
In this case, we can choose them so that each $v_i$ is \textbf{primitive},
i.e.,
not a multiple of another lattice element by an integer greater than 1.
With this choice of $v_i$, the polygon is \textbf{unimodular} if 
$$ \det (v_i, v_{i+1}) = 1$$
for every $i=1,\ldots,n$, where we set $v_{n+1} = v_1$,
and it is a \textbf{halfway-lattice} polygon if
the $\lambda_i$ are half-integers (i.e., integers plus $\frac{1}{2}$).
In particular, each edge of the polygon lies on a line
of rational slope that does not meet any lattice points.

The affine transformations of $\R^2$ 
that take the lattice $\Z^2$ onto itself are those of the form
$x \mapsto Ax + b$ for $A \in \GL(2,\Z)$ and for $b \in \Z^2$.
\ ($\GL(2,\Z)$ is the group of $2 \times 2$ matrices 
with determinant $\pm 1$; equivalently, the invertible $2 \times 2$ matrices
with integer entries and whose inverse has integer entries.)
Primitive lattice vectors $u,v$ satisfy $\det(u,v) = 1$
if and only if there exists such a transformation
that takes them to the standard basis elements $\ds \binom{1}{0}$
and $\ds \binom{0}{1}$, respectively.
Two polygons are \textbf{lattice congruent} if there exists
such a transformation that carries one of them onto the other.
Polygons that are lattice congruent have the same area,
the same number of vertices, and the same number of lattice points
in their interior.
This follows from the following result.

\begin{Proposition} \labell{prop:Ax+bProperties}
Let $P$ be a polygon.   Let $f \colon \R^2 \to \R^2$
be a map of the form $x \mapsto Ax +b$ with $A \in \GL(2,\Z)$ and $b \in \Z^2$.
Let $Q = f(P)$.
\begin{itemize}
\item \labell{item-linear}
$Q$ is a polygon.
The map $f$ carries the interior of $P$ to the interior of $Q$,
the vertices of $P$ to those of $Q$,
and the edges of $P$ to those of $Q$.
The linear map $(A^{-1})^T \colon \R^2 \to \R^2$
maps consecutive inner normal vectors to edges of $P$ 
to consecutive inner normal vectors to edges of $Q$.
\item \labell{item-area}
The area of $Q$ is the same as that of $P$.
\item \labell{item-lattice}
If $P$ is rational, then so is $Q$, and the map $(A^{-1})^T$
maps the primitive inner normal vectors to edges of $P$ to those of $Q$.
If $P$ is unimodular, then so is $Q$.
If $P$ is a halfway-lattice polygon, then so is $Q$.
\end{itemize}
\end{Proposition}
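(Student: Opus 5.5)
The plan is to work directly from the half-plane description \eqref{intersection}, and to use the single identity
\[
\langle f(x), w\rangle = \langle Ax+b, w\rangle = \langle x, A^T w\rangle + \langle b, w\rangle .
\]
Write $P=\{x \mid \langle x,v_i\rangle+\lambda_i\ge 0,\ i=1,\ldots,n\}$ with minimal $n$ and counterclockwise indexing. Since $f$ is a bijection, $y=f(x)$ lies in $Q$ if and only if $x=A^{-1}(y-b)$ lies in $P$. For each $i$ we have
\[
\langle A^{-1}(y-b),v_i\rangle+\lambda_i = \langle y, w_i\rangle + \mu_i,
\qquad w_i=(A^{-1})^Tv_i, \quad \mu_i=\lambda_i-\langle b,w_i\rangle .
\]
So $Q=\{y \mid \langle y,w_i\rangle+\mu_i\ge0\}$, which is again of the form \eqref{intersection}.

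\emph{First bullet.} Because $f$ is an affine homeomorphism of $\R^2$, $Q$ is bounded and has non-empty interior, and $f$ maps the interior of $P$ onto the interior of $Q$. Equivalently, $f$ maps convex hulls to convex hulls, so $Q$ is the convex hull of the images of the points. Minimality of $n$ transfers to $Q$: a shorter expression for $Q$ would pull back under $f$ to a shorter expression for $P$. The edge $P\cap\{\langle x,v_i\rangle+\lambda_i=0\}$ maps to $Q\cap\{\langle y,w_i\rangle+\mu_i=0\}$, so edges go to edges, and vertices (intersections of consecutive edges) go to vertices. The one point needing care is the word ``consecutive.'' Adjacency of edges is preserved, since two edges share a vertex exactly when their images do. So the cyclic order is preserved, possibly reversed when $\det A=-1$. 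Reversal still leaves the $w_i$ consecutive as a cyclic sequence, and the paper's indexing conventions allow re-indexing. I expect this orientation bookkeeping to be the only mildly delicate step.

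\emph{Second bullet.} The change of variables formula gives $\operatorname{area}(Q)=|\det A|\operatorname{area}(P)=\operatorname{area}(P)$, because $\det A=\pm1$.

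\emph{Third bullet.} The matrix $(A^{-1})^T$ lies in $\GL(2,\Z)$, so it maps $\Z^2$ bijectively to $\Z^2$. Hence it preserves primitivity: if $w_i=kw'$ with $w'\in\Z^2$ and $k>1$, applying $A^T$ would show $v_i$ is not primitive. Thus the $w_i$ are the primitive inner normals of $Q$, and $Q$ is rational.

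For unimodularity, $\det(w_i,w_{i+1})=\det(A^{-1})\det(v_i,v_{i+1})=\pm1$. If $\det A=1$ this is $1$. If $\det A=-1$, the orientation of the boundary is reversed, so in the counterclockwise indexing of $Q$ the consecutive pairs appear in the opposite order. The relevant determinant is then $\det(w_{i+1},w_i)=1$.

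For the halfway-lattice property, $\langle b,w_i\rangle\in\Z$ because $b,w_i\in\Z^2$. Hence $\mu_i=\lambda_i-\langle b,w_i\rangle$ is a half-integer whenever $\lambda_i$ is.
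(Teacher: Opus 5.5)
Your proof is correct and follows the same route as the paper's sketch. The first item comes from $f$ being an affine isomorphism, the second from $|\det A|=1$, and the third from $(A^{-1})^T$ preserving $\Z^2$; you just make explicit the constants $\mu_i=\lambda_i-\langle b,w_i\rangle$ (where $b\in\Z^2$ is actually used) and the reversal of the cyclic order when $\det A=-1$, both of which the paper leaves implicit.
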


\begin{proof}[Sketch of proof]
The first item is true because $f$ is a linear isomorphism
followed by a translation.
The second is true because $|\det A| = 1$.
The third is true because $A$ maps the lattice onto the lattice.
\end{proof}

\section{Unimodular halfway-lattice triangles and trapezoids}
\labell{sec:triangles}

{
\smallskip

A \textbf{triangle} is a polygon with three edges.
A \textbf{standard triangle} 
is a (unimodular) triangle whose primitive inner normal vectors are 
$$ \binom{1}{0} , \ 
   \binom{0}{1} , \ \text{ and }
   \binom{-1}{-1}. $$
For example, see Figure \ref{fig:triangle}. 

\begin{figure}[H]
\centering
  \begin{tikzpicture}
  
    \coordinate (A) at (0.5,0.5);
    \coordinate (B) at (6,0.5);
    \coordinate (C) at (0.5,6);
    
    \coordinate (v1a) at (0.5,4.15);    
    \coordinate (v1b) at (1.5,4.15);
    \coordinate (v2a) at (2.75,0.5);
    \coordinate (v2b) at (2.75,1.5);
    \coordinate (v3a) at (3,3.5);
    \coordinate (v3b) at (2,2.5);
 
    \foreach \x in {0,1,...,6}{
      \foreach \y in {0,1,...,6}{
        \node[draw,circle,inner sep=1pt,fill] at (1*\x,1*\y) {};
      }
    }
    
    \draw [,-,fill=gray!50, opacity=0.25,gray!50, opacity=0.25] (A) -- (B) -- (C) -- (A)--cycle;
    \draw [thick,-,black] (A) -- (B) -- (C) -- (A) node [above left] {};
 
\draw [ultra thick,-latex,blue] (v1a) -- (v1b) node [below] 
      {$\binom{1}{0}$};
\draw [ultra thick,-latex,blue] (v2a) -- (v2b) node [right] 
      {$\binom{0}{1}$};
\draw [ultra thick,-latex,blue] (v3a) -- (v3b) node [left] 
      {$\binom{-1}{-1}$};
 
  \end{tikzpicture}
\caption{A standard triangle \comment{Charlene's Figure 3: a standard triangle}} 
\label{fig:triangle}
\end{figure}

\begin{Proposition} \labell{prop:standardTriangleCongruence}
Every unimodular triangle is lattice congruent to a standard triangle.
\end{Proposition}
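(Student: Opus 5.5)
Let $T$ be a unimodular triangle with primitive inner normal vectors $v_1, v_2, v_3$, indexed counterclockwise, so that $\det(v_1,v_2)=\det(v_2,v_3)=\det(v_3,v_1)=1$. Note that the statement only asks for lattice congruence to \emph{some} standard triangle. Since a standard triangle is characterized only by its primitive inner normals, the $\lambda_i$ are unconstrained. The plan is therefore to find $A\in\GL(2,\Z)$ with $(A^{-1})^T v_1=\binom{1}{0}$, $(A^{-1})^T v_2=\binom{0}{1}$, $(A^{-1})^T v_3=\binom{-1}{-1}$, and take $f(x)=Ax$ (with $b=0$). Proposition~\ref{prop:Ax+bProperties} then says $f(T)$ is a triangle whose primitive inner normals are the images of the $v_i$ under $(A^{-1})^T$. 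Those images are exactly the standard normals, so $f(T)$ is a standard triangle.

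First step: let $M$ be the integer matrix with columns $v_1,v_2$. Then $\det M=1$, so $M\in\GL(2,\Z)$, and $B:=M^{-1}$ sends $v_1\mapsto\binom10$ and $v_2\mapsto\binom01$. Set $A=(B^{-1})^T=M^T$, which lies in $\GL(2,\Z)$, so that $(A^{-1})^T=B$.

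Second step, which is the only real content: show that $Bv_3=\binom{-1}{-1}$. Write $w=Bv_3=\binom{a}{b}$. Since $B$ has determinant $1$ it preserves determinants, so
$$\det\left(\tbinom01,w\right)=\det(v_2,v_3)=1 \quad\text{and}\quad \det\left(w,\tbinom10\right)=\det(v_3,v_1)=1.$$
These give $-a=1$ and $-b=1$, so $w=\binom{-1}{-1}$. Primitivity of the images is automatic, since $B$ preserves the lattice.

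The main obstacle is just bookkeeping: one must confirm that the counterclockwise ordering convention makes all three determinants equal to $+1$, including the cyclic one $\det(v_3,v_1)=1$. This is built into the definition of unimodular, since that definition uses $v_{n+1}=v_1$. One must also check that the normals of $f(T)$ inherit counterclockwise order, which holds because $\det B=1$ preserves orientation.
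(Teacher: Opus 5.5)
Your proof is correct and follows the paper's argument. The paper likewise uses $\det(v_1,v_2)=1$ to move $v_1,v_2$ to the standard basis by a lattice congruence, then reads off $v_3=\binom{-1}{-1}$ from $\det(v_2,v_3)=\det(v_3,v_1)=1$; you make that congruence explicit (taking $A=M^T$, so that $(A^{-1})^T=M^{-1}$), which the paper leaves implicit.
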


\begin{proof}
Let $P$ be a unimodular halfway-lattice triangle with primitive 
inner normal vectors $v_{1}, v_{2}, v_{3}$, in counterclockwise 
consecutive order.
Because $\det (v_1,v_2) = 1$,  up to lattice congruence, we may assume
that $\ds v_1=\binom{1}{0}$ and $\ds v_2=\binom{0}{1}$. 
Write $\ds v_3=\binom{x_3}{y_3}$.
Then $x_3 = -\det (v_2,v_3) = -1$, and $y_3 = -\det (v_3,v_1) = -1$.
\end{proof}
}

\smallskip
A \textbf{quadrilateral} is a polygon with four edges.
A \textbf{trapezoid} is a quadrilateral with two parallel edges;
equivalently, it is a quadrilateral that has two opposite inner normal vectors.
A \textbf{standard trapezoid} is a (unimodular) quadrilateral
whose primitive inner normal vectors are 
$$ \binom{1}{0} , \ 
   \binom{0}{1} , \ 
   \binom{-1}{k} , \ \text{ and }
   \binom{0}{-1} $$
for some non-negative integer $k$.
For example, see Figure \ref{fig:hirzebruch}, where $k=2$.

{

\begin{figure}[H]
\centering
 
  \begin{tikzpicture}
  
    \coordinate (A) at (0.5,0.5);
    \coordinate (B) at (2.5,0.5);
    \coordinate (C) at (8.5,3.5);
    \coordinate (D) at (0.5,3.5);
    
    \coordinate (v1a) at (0.5,2.35);    
    \coordinate (v1b) at (1.5,2.35);
    \coordinate (v2a) at (1.75,0.5);
    \coordinate (v2b) at (1.75,1.5);
    \coordinate (v3a) at (3.725,1.1125);
    \coordinate (v3b) at (2.725,3.1125);
    \coordinate (v4a) at (4.65,3.5);
    \coordinate (v4b) at (4.65,2.5);
 
    \foreach \x in {0,1,...,9}{
      \foreach \y in {0,1,...,4}{
        \node[draw,circle,inner sep=1pt,fill] at (1*\x,1*\y) {};
      }
    }
    
    \draw [,-,fill=gray!50, opacity=0.25,gray!50, opacity=0.25] (A) -- (B) -- (C) -- (D) -- (A)--cycle;
    \draw [thick,-,black] (A) -- (B) -- (C) -- (D) -- (A) node [above left] {};
\draw [ultra thick,-latex,blue] (v1a) -- (v1b) node [above] 
        {$\binom{1}{0}$};
\draw [ultra thick,-latex,blue] (v2a) -- (v2b) node [right] 
        {$\binom{0}{1}$};
\draw [ultra thick,-latex,blue] (v3a) -- (v3b) node [below right] 
        {$\ \binom{-1}{k}$};
\draw [ultra thick,-latex,blue] (v4a) -- (v4b) node [right] 
        {$\binom{0}{-1}$};
    
  \end{tikzpicture}
\caption{A standard trapezoid \comment{Charlene's Figure 5: a standard trapezoid}}
\label{fig:hirzebruch}
\end{figure}

In Corollary~\ref{prop:hirzebruchTrapezoidCongruence} we show that 
every unimodular quadrilateral is lattice congruent to a standard trapezoid.
Its proof relies on the following special case:

\begin{Proposition} \labell{prop:parallel-then-standard}
Every unimodular trapezoid is lattice congruent to a standard trapezoid.
\end{Proposition}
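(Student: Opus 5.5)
Let $P$ be a unimodular trapezoid with primitive inner normals $v_1,v_2,v_3,v_4$ in counterclockwise order. Since $P$ is a trapezoid, two of these normals are opposite. They cannot be consecutive: if $v_{i+1}=-v_i$ then $\det(v_i,v_{i+1})=0\neq 1$. So, after a cyclic relabeling, we may assume $v_4=-v_2$. The plan is then to normalize $v_1$ and $v_2$ with a lattice congruence and read off the remaining normals from the unimodularity conditions, as in the proof of Proposition~\ref{prop:standardTriangleCongruence}.

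First, since $\det(v_1,v_2)=1$, there is $A\in\GL(2,\Z)$ with $\det A=1$ such that $(A^{-1})^T$ carries $v_1,v_2$ to $\binom{1}{0},\binom{0}{1}$. By Proposition~\ref{prop:Ax+bProperties}, $f(x)=Ax$ maps $P$ to a unimodular trapezoid whose consecutive primitive inner normals are the images of the $v_i$. Orientation is preserved because $\det A=1$, so the counterclockwise order is kept. We may therefore assume $v_1=\binom10$, $v_2=\binom01$ and $v_4=\binom0{-1}$. Write $v_3=\binom{a}{b}$. Unimodularity gives $\det(v_2,v_3)=-a=1$, so $a=-1$. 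The condition $\det(v_3,v_4)=-a=1$ is consistent with this, and $\det(v_4,v_1)=1$ holds automatically. Thus $v_3=\binom{-1}{b}$ for some integer $b$, and this vector is automatically primitive.

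It remains to arrange $b\ge 0$. If $b<0$, I would use the reflection $A=\begin{pmatrix}-1&0\\0&1\end{pmatrix}$ composed with a relabeling. Then $(A^{-1})^T=A$ sends $v_1,v_2,v_3,v_4$ to $\binom{-1}0,\binom01,\binom1b,\binom0{-1}$. This reverses orientation, so the counterclockwise order becomes the reverse cyclic order: $\binom1b,\binom01,\binom{-1}0,\binom0{-1}$. Starting instead from $\binom{1}{b}$ might not be of the standard form, so the cleaner fix is the reflection $(x,y)\mapsto(x,-y)$ together with the relabeling $v_2\leftrightarrow v_4$. Concretely, the normals become $\binom10,\binom0{-1},\binom{-1}{-b},\binom01$, and read counterclockwise from $\binom10$ they are $\binom10,\binom01,\binom{-1}{-b},\binom0{-1}$. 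This is standard form with $k=-b>0$. If instead $b\ge 0$, we are already in standard form with $k=b$.

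The main obstacle is this orientation bookkeeping: checking that each congruence sends the counterclockwise cyclic order of normals to the claimed order. For orientation-reversing maps this means reversing the cycle and then rotating it. I would confirm this directly from the fact that the edge directions are determined by the normals.
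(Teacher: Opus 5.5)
Your proof is correct and follows the paper's argument. You place the opposite normals at positions $2$ and $4$, normalize $v_1,v_2$ to the standard basis, read off $v_3=\binom{-1}{k}$ from $\det(v_2,v_3)=1$, and apply $(x,y)\mapsto(x,-y)$ when $k<0$; the only difference is that you explicitly check the reordering of normals under this orientation-reversing reflection (swapping the roles of $v_2$ and $v_4$), which the paper leaves implicit.
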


\begin{proof}
Let $P$ be a unimodular trapezoid, and let $\Sigma^1$ be the set
of primitive inner normal vectors to its edges.
By assumption, $\Sigma^1$ contains two opposite vectors.
Enumerate the elements of $\Sigma^1$ 
in counterclockwise consecutive order as $v_1,v_2,v_3,v_4$ 
such that $v_2$ and $v_4$ are opposite;
we can do this 
because opposite vectors cannot be normals to adjacent edges.
Up to lattice congruence, we may assume that 
$\displaystyle v_1 = \binom{1}{0}$ and $\displaystyle v_2 = \binom{0}{1}$.
So 
$\displaystyle v_4= \binom{0}{-1}$,
and 
the condition $\det(v_2,v_3)=1$ implies that 
$\displaystyle v_3=\binom{-1}{k}$ for some integer $k$.
If $k$ is negative, then applying the lattice congruence
$\displaystyle \binom{x}{y} \mapsto \binom{x}{-y}$
yields a standard trapezoid.
\end{proof}

\begin{Proposition} \labell{prop:area4-parallel}
Let $P$ be a unimodular halfway-lattice trapezoid.
Let $A$ be the area of $P$, and let $I$ be the number of lattice points
in its interior.  Then 
$$A=I.$$
\end{Proposition}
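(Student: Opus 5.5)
By Proposition~\ref{prop:parallel-then-standard} and Proposition~\ref{prop:Ax+bProperties}, we may assume that $P$ is a standard trapezoid. The primitive inner normals are then $\binom{1}{0},\binom{0}{1},\binom{-1}{k},\binom{0}{-1}$ with $k \geq 0$, and the $\lambda_i$ are half-integers. Lattice congruence preserves the area, the number of interior lattice points, and the halfway-lattice property. Translating by an integer vector, we may also assume that the first two edges lie on the lines $x=-\frac12$ and $y=-\frac12$. Then
$$P=\{(x,y) \ | \ x\ge -\tfrac12,\ 0\le y+\tfrac12\le h,\ -x+ky+c\ge 0\},$$
where the top edge is $y=h-\frac12$ with $h$ a positive integer, and $c$ is a half-integer.

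The plan is to slice $P$ by the horizontal strips $j-\frac12\le y\le j+\frac12$ for $j=0,1,\dots,h-1$. In the strip indexed by $j$, the region is the trapezoid
$$-\tfrac12\le x\le ky+c, \qquad |y-j|\le \tfrac12.$$
Its right boundary is linear in $y$, so its area equals its height times its width at the midline $y=j$, namely $(kj+c)+\frac12$. Since $c$ is a half-integer, this is an integer.

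Next I count the interior lattice points of $P$. Because $P$ is a halfway-lattice polygon, no lattice point lies on $\partial P$, so every lattice point of $P$ is interior. The lattice points in strip $j$ lie on the row $y=j$ and have integer $x$ with $0\le x\le kj+c$. Since $kj+c$ is a half-integer, the count is $kj+c+\frac12$. This number is nonnegative, because the right edge must stay to the right of $x=-\frac12$ throughout. It therefore matches the area of the slice exactly.

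Summing over $j$ gives $A=I$. The only delicate point is checking that the slicing is clean. The strip boundaries $y=j\pm\frac12$ contain no lattice points, so there is no double counting. Also, each slice is a genuine trapezoid or triangle whose width at the midline is nonnegative, which follows from convexity together with the fact that $P$ has four edges. I expect this bookkeeping to be the main (mild) obstacle; the rest is the midline computation above.
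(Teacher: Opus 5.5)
Your argument is correct, but after the common reduction to a standard trapezoid it proceeds differently from the paper.

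The paper glues two copies of the standard trapezoid into a halfway-lattice rectangle, one of them moved by a lattice congruence. This is possible because the $x$-coordinates of the two right-hand vertices are either both integers or both half-integers, according to the parity of $k$. It then tiles the rectangle by unit squares centred at lattice points to get $2A=2I$.

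You instead cut $P$ into unit-height horizontal strips centred on the rows $y=j$. You then use that the midpoint rule is exact for the linear width $ky+c+\frac12$, so each strip has area $kj+c+\frac12$. That is exactly the number of lattice points on the row $y=j$.

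What each route buys:
\begin{itemize}
\item Your route needs neither the doubling trick nor the parity discussion.
\item It proves the finer statement that area and lattice-point count agree strip by strip.
\item It shows exactly where the half-integrality of $c$ and of the other three edges is used.
\item The paper's route is shorter and more pictorial, reducing everything to the rectangle case where the equality is evident.
\end{itemize}

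A minor point: the slices are never triangles. Since $P$ has four edges, the width $ky+c+\frac12$ is positive at $y=-\frac12$ and at $y=h-\frac12$, hence on the whole range by linearity. This is precisely the nonnegativity you need.
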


\begin{proof}
By Propositions~\ref{prop:parallel-then-standard}
and~\ref{prop:Ax+bProperties}, we may assume without loss of generality 
that $P$ is a standard trapezoid,
as in Figure~\ref{fig:hirzebruchTrapezoidProof}.

{
\begin{figure}[H]

\centering

  (i) \qquad \begin{tabular}{c}
  \begin{tikzpicture}

    \foreach \x in {1,2,...,10}{
      \foreach \y in {1,2,...,2}{
        \node[draw,circle,inner sep=1pt,fill] at (1*\x,1*\y) {};
      }
    }

    \coordinate (A) at (0.5,0.5);
    \coordinate (B) at (4,0.5);
    \coordinate (C) at (10,2.5);
    \coordinate (D) at (0.5,2.5);
    \draw [,-,fill=gray!50, opacity=0.25,gray!50, opacity=0.25] (A) -- (B) -- (C) -- (D) -- (A)--cycle;
    \draw [thick,-,black] (A) -- (B) -- (C) -- (D) -- (A) node [above left] {};


    \coordinate (v1a) at (1.75,3);
    \coordinate (v1b) at (0.5,3);
    \coordinate (v2a) at (2.75,3);
    \coordinate (v2b) at (4,3);
    \coordinate (n) at (2.25,3);
    \draw [thick,-latex,blue] (v1a) -- (v1b) node [above left] {};
    \draw [thick,-latex,blue] (v2a) -- (v2b) node [below left] {};
    \draw [thick,-,blue] (n) node [] {$n$};
    \coordinate (v3a) at (0,2);
    \coordinate (v3b) at (0,2.5);
    \coordinate (v4a) at (0,1);
    \coordinate (v4b) at (0,0.5);
    \coordinate (m) at (0,1.5);
    \draw [thick,-latex,blue] (v3a) -- (v3b) node [right] {};
    \draw [thick,-latex,blue] (v4a) -- (v4b) node [above right] {};
    \draw [thick,-,blue] (m) node [] {$m$};

    \coordinate (v5a) at (6.5,3);
    \coordinate (v5b) at (4,3);
    \coordinate (v6a) at (7.5,3);
    \coordinate (v6b) at (10,3);
    \coordinate (km) at (7,3);
    \draw [thick,-latex,blue] (v5a) -- (v5b) node [right] {};
    \draw [thick,-latex,blue] (v6a) -- (v6b) node [right] {};
    \draw [thick,-,blue] (km) node [] {$km$};

    \coordinate (k) at (9,1.5);
    \draw [thick,-,blue] (k) node [] {$k$ odd};

    \coordinate (vline1a) at (4,0.5);
    \coordinate (vline1b) at (4,2.5);
    \draw [thick,dashed,black] (vline1a) -- (vline1b) node [] {};



  \end{tikzpicture}
  \end{tabular}

\bigskip

  (ii) \qquad \begin{tabular}{c}
  \begin{tikzpicture}

    \foreach \x in {1,2,...,8}{
      \foreach \y in {1,2,...,3}{
        \node[draw,circle,inner sep=1pt,fill] at (1*\x,1*\y) {};
      }
    }

    \coordinate (A) at (0.5,0.5);
    \coordinate (B) at (2.5,0.5);
    \coordinate (C) at (8.5,3.5);
    \coordinate (D) at (0.5,3.5);
    \draw [,-,fill=gray!50, opacity=0.25,gray!50, opacity=0.25] (A) -- (B) -- (C) -- (D) -- (A)--cycle;
    \draw [thick,-,black] (A) -- (B) -- (C) -- (D) -- (A) node [above left] {};

    \coordinate (v1a) at (1,4);
    \coordinate (v1b) at (0.5,4);
    \coordinate (v2a) at (2,4);
    \coordinate (v2b) at (2.5,4);
    \coordinate (n) at (1.5,4);
    \draw [thick,-latex,blue] (v1a) -- (v1b) node [above left] {};
    \draw [thick,-latex,blue] (v2a) -- (v2b) node [below left] {};
    \draw [thick,-,blue] (n) node [] {$n$};
    \coordinate (v3a) at (0,2.5);
    \coordinate (v3b) at (0,3.5);
    \coordinate (v4a) at (0,1.5);
    \coordinate (v4b) at (0,0.5);
    \coordinate (m) at (0,2.0);
    \draw [thick,-latex,blue] (v3a) -- (v3b) node [right] {};
    \draw [thick,-latex,blue] (v4a) -- (v4b) node [above right] {};
    \draw [thick,-,blue] (m) node [] {$m$};

    \coordinate (v5a) at (5,4);
    \coordinate (v5b) at (2.5,4);
    \draw [thick,-latex,blue] (v5a) -- (v5b) node [right] {};
    \coordinate (v6a) at (6,4);
    \coordinate (v6b) at (8.5,4);
    \draw [thick,-latex,blue] (v6a) -- (v6b) node [right] {};
    \coordinate (km) at (5.5,4);
    \draw [thick,-,blue] (km) node [] {$km$};

    \coordinate (k) at (6,1.5);
    \draw [thick,-,blue] (k) node [] {$k$ even};

    \coordinate (vline1a) at (2.5,0.5);
    \coordinate (vline1b) at (2.5,3.5);
    \draw [thick,dashed,black] (vline1a) -- (vline1b) node [] {};

  \end{tikzpicture}\end{tabular}
\caption{Two standard trapezoids 
\comment{Charlene's Figure 6}} \label{fig:hirzebruchTrapezoidProof}
\end{figure}
}

The $x$-coordinates of the top-right and bottom-right vertices 
are either both integers or both half-integers 
(i.e., integers plus $\frac{1}{2}$),
depending on whether the non-negative integer $k$ 
in the primitive inner normal vector 
$\ds \binom{-1}{\phantom{-}k}$ of $P$ is odd or even.
Either way, two copies of the standard trapezoid fit together
 --- after applying a lattice congruence to one of them --- into a 
halfway-lattice rectangle, as in Figure~\ref{fig:fit-into-rectangle}.
The area of the rectangle is $2A$, and the number of lattice points
in its interior is $2I$. 
By subdividing the rectangle into squares
as in Figure~\ref{fig:rectangle-into-squares}, we see that 
the area of the rectangle is equal to the number of lattice points
in its interior. That is, $2A=2I$, and thus $A=I$.

{
\begin{figure}[H]

\centering

\qquad \begin{tabular}{c}
\begin{tikzpicture}

    \foreach \x in {1,2,...,10}{
      \foreach \y in {1,2,...,3}{
        \node[draw,circle,inner sep=1pt,fill] at (1*\x,1*\y) {};
      }
    }

    \coordinate (A) at (0.5,0.5);
    \coordinate (B) at (2.5,0.5);
    \coordinate (C) at (8.5,3.5);
    \coordinate (D) at (0.5,3.5);
    \draw [,-,fill=gray!50, opacity=0.25,gray!50, opacity=0.25] (A) -- (B) -- (C) -- (D) -- (A)--cycle;
    \draw [thick,-,black] (A) -- (B) -- (C) -- (D) -- (A) node [above left] {};
\coordinate (E) at (10.5,0.5);
\coordinate (F) at (10.5,3.5);
    \draw [,-,fill=gray!50, opacity=0.25,gray!50, opacity=0.25] (B) -- (E) -- (F) -- (C)--cycle;
    \draw [thick,-,black] (B) -- (E) -- (F) -- (C) node [above left] {};
%

  \end{tikzpicture}\end{tabular}
\caption{Double of a standard trapezoid} \label{fig:fit-into-rectangle}
\end{figure}
}

{
\begin{figure}[H]

\centering

\qquad \begin{tabular}{c}
\begin{tikzpicture}

    \foreach \x in {1,2,...,10}{
      \foreach \y in {1,2,...,3}{
        \node[draw,circle,inner sep=1pt,fill] at (1*\x,1*\y) {};
      }
    }

    \coordinate (A) at (0.5,0.5);
    \coordinate (B) at (10.5,0.5);
    \coordinate (C) at (10.5,3.5);
    \coordinate (D) at (0.5,3.5);
    \draw [,-,fill=gray!50, opacity=0.25,gray!50, opacity=0.25] (A) -- (B) -- (C) -- (D) -- (A)--cycle;
    \draw [thick,-,black] (A) -- (B) -- (C) -- (D) -- (A) node [above left] {};
    \foreach \x in {1,2,...,9}{
    \draw [thick,dashed,black] (0.5+\x,0.5) -- (0.5+\x,3.5) node [] {};
    }
    \foreach \y in {1,2}{
    \draw [thick,dashed,black] (0.5,0.5+\y) -- (10.5,0.5+\y) node [] {};
    }

  \end{tikzpicture}\end{tabular}
\caption{Unimodular halfway-lattice rectangle} 
\label{fig:rectangle-into-squares}
\end{figure}
}
\end{proof}

{
\begin{Proposition}  \labell{prop:area3}
Let $P$ be a unimodular halfway-lattice triangle.
Let $A$ be its area, and let $I$ be the number of lattice points 
in its interior.  Then $$A = I + \frac{1}{8}.$$
\end{Proposition}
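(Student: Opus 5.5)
By Proposition~\ref{prop:standardTriangleCongruence} and Proposition~\ref{prop:Ax+bProperties}, we may assume that $P$ is a standard triangle. Its edges then lie on the lines $x = a$, $y = b$, and $x + y = c$, where $a,b,c$ are half-integers. Translating by the lattice vector $(a-\frac12,\, b-\frac12)$, which is again a lattice congruence, we may further assume $a = b = \frac12$. Then $c = \frac12 + 1 + m$ for some integer $m \ge 0$, since the interior must be non-empty and $c$ must be a half-integer exceeding $1$. So $P$ is the right isosceles triangle with legs of length $L = c - 1 = m + \frac12$.

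Next we compute both sides directly. The area is
$$A = \frac{L^2}{2} = \frac{(m+\frac12)^2}{2} = \frac{m^2+m}{2} + \frac18 .$$
The interior lattice points are the integer points $(x,y)$ with $x \ge 1$, $y \ge 1$ and $x + y < m + 2$, that is, $x + y \le m+1$. Setting $x' = x-1$ and $y' = y-1$, these correspond to non-negative integers with $x'+y' \le m-1$. There are
$$I = \frac{m(m+1)}{2}$$
of them, and this count is also correct when $m = 0$. Comparing the two expressions gives $A = I + \frac18$.

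Alternatively, to stay closer to the paper's geometric style, we could cut the triangle along the line $x = m + \frac12$, or along a similar halfway line. This produces a smaller standard triangle and a unimodular halfway-lattice trapezoid, to which Proposition~\ref{prop:area4-parallel} applies, and we would then induct on $m$ from the base case $m=0$. The base case is the triangle with legs $\frac12$: it has area $\frac18$ and no interior lattice points. The cutting line contains no lattice points, so interior lattice points split correctly between the two pieces.

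The only step needing care is the normalization. We must check that the translation really makes the two leg lines $x=\frac12$ and $y=\frac12$; this holds because $a$ and $b$ are half-integers, so the translation vector is integral. We must also confirm that the hypotenuse parameter is $c = m + \frac32$ with $m \ge 0$. Everything after that is a direct count.
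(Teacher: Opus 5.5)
Your proof is correct, but your main argument takes a different route from the paper's. Both start from the same normalization; the paper writes the triangle as $x\ge\frac12$, $y\ge\frac12$, $x+y\le k+\frac12$, so $k=m+1$ in your notation. From there the paper does no counting. It chops off the right-angle corner along the halfway line $x+y=\frac32$. The figure shows this cut; the text's ``$x+y\ge\frac12$'' is a typo for $\frac32$. What remains is a unimodular halfway-lattice trapezoid with normals $\binom{1}{0},\binom{1}{1},\binom{0}{1},\binom{-1}{-1}$, to which Proposition~\ref{prop:area4-parallel} applies. The piece cut off is a standard triangle of area $\frac18$ with no interior lattice points. Since the cutting line contains no lattice points, $A-\frac18=I$.

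You instead compute both sides explicitly, $A=\frac12\left(m+\frac12\right)^2$ and $I=\frac{m(m+1)}{2}$, and subtract. Your route is self-contained and independent of the trapezoid result and of the doubling-into-a-rectangle argument behind it. It also shows exactly where the $\frac18$ comes from, namely $\frac12(m+\frac12)^2-\frac12 m(m+1)=\frac18$. The paper's route avoids any lattice-point counting and uses the same cut-and-reassemble mechanism that drives the induction in Theorem~\ref{thm:mainTheorem}.

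Your sketched alternative is essentially the paper's proof, with the cut made at an acute corner ($x=m+\frac12$) instead of at the right angle. No induction on $m$ is needed there, because the piece cut off is always the triangle of area $\frac18$. Only $m=0$, where the trapezoid would be degenerate, has to be handled directly.
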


\begin{proof}
{
By Propositions~\ref{prop:standardTriangleCongruence}
and~\ref{prop:Ax+bProperties}, we may assume 
without loss of generality that the halfway-lattice triangle $P$ is standard,
and that it's given by the equations $x \geq \frac{1}{2}$, 
$y \geq \frac{1}{2}$, and $x+y \leq k+\frac{1}{2}$ for some $k \in \N$,
as in Figure~\ref{fig:triangle}.
Intersecting it with the half-plane cut out by the inequality
$\ds x+y \geq \frac{1}{2}$ yields a unimodular halfway-lattice trapezoid; see
Figure~\ref{fig:standardTriangleProof2}. 
The remaining triangle is a standard halfway-lattice triangle 
of area $\ds \frac{1}{8}$ and with no interior lattice points.
So the area of the trapezoid is $\ds A - \frac{1}{8}$,
and the number of lattice points in its interior is $I$.
By Proposition~\ref{prop:area4-parallel}, $\ds A - \frac{1}{8} = I$.
The result follows.

\medskip
{
\begin{figure}[H]
\centering
  \begin{tikzpicture}

    \foreach \x in {0,1,2,...,6}{
      \foreach \y in {0,1,2,...,6}{
        \node[draw,circle,inner sep=1pt,fill] at (1*\x,1*\y) {};
      }
    }

    \coordinate (A) at (0.5,0.5);
    \coordinate (B) at (6,0.5);
    \coordinate (C) at (0.5,6);
    \draw [,-,fill=gray!50, opacity=0.25,gray!50, opacity=0.25] (A) -- (B) -- (C) -- (A)--cycle;
    \draw [thick,-,black] (A) -- (B) -- (C) -- (A) node [above left] {};

    \coordinate (D) at (1,0.5);
    \coordinate (E) at (0.5,1);
    \draw [thick,black] (D) -- (E) node [] {};
    \coordinate(F) at (1.5,0);
    \coordinate(G) at (0,1.5);
    \draw [thick,dashed,black] (D) -- (F) node [] {};
    \draw [thick,dashed,black] (E) -- (G) node [] {};

  \end{tikzpicture}
\caption{Triangle to trapezoid} \label{fig:standardTriangleProof2}
\end{figure}
}}

\end{proof}
}

\section{Two opposite inner normal vectors}
\labell{sec:opposite}

Proposition~\ref{prop:area4-parallel} shows that our Pick-type theorem
holds for unimodular polygons with $n=4$ edges
whose set of primitive inner normals contains two opposite vectors.
In Proposition~\ref{no opposite implies triangle} below we recall 
that \emph{every} unimodular polygon with $n \geq 4$ edges has this property.
In Corollary~\ref{cor:area4} we conclude that 
our Pick-type theorem holds for all unimodular halfway-lattice polygons
with $n=4$ edges.


\begin{Remark}\labell{rk:opposite normals}
The result of Proposition~\ref{no opposite implies triangle} below
 -- that for every unimodular polygon with $n \geq 4$ edges,
its set of primitive inner normals contains two opposite vectors ---
appeared back in the 1970s in an algebraic-geometric context,
as a central step in the characterization 
of non-singular compact complex toric surfaces,
due to Miyake and Oda; see \cite[Page~40, Lemma~8.3]{oda}.
We include below a more leisurely proof
that we hope that some readers might find easier to follow
than the versions in the literature.
\end{Remark}

%

\begin{Notation} \labell{oC}
For any non-collinear vectors $u$ and $v$ in $\R^2$, we denote
$$ \bm{\mathring{C}}(u,v) 
   := \{ au+bv \ | \ a,b \in \R \text{ and } a,b > 0 \}\,.$$
See Figure~\ref{fig:oC}.
\end{Notation}

\begin{figure}[H]
\begin{tikzpicture}
\draw[thick,black,->] (0,0) -- (2,0) node[anchor=west] {$u$};
\draw[thick,black,->] (0,0) -- (0,2) node[anchor=south] {$v$};
\draw[,-,fill=gray!50,opacity=0.25,gray!50,opacity=0.25] 
(4,0) -- (0,0) -- (0,4) -- (2,3) -- (3,2) --cycle;
\draw[thick,black] (1.5,1.5) node[anchor=center]{$\oC(u,v)$};
\end{tikzpicture}
\caption{An open cone}
\label{fig:oC}
\end{figure}

\begin{Lemma} \labell{new cones}
Let $v_1,v_2$ be vectors in $\R^2$ such that $\det(v_1,v_2) > 0$.  
For any $w,w',w'' \in \R^2$, 
\begin{enumerate}
\item \labell{newcones1}
If $w \in \oC(-v_1,-v_2)$, then $v_1 \in \oC(-v_2,-w)$.
\item \labell{newcones2}
If $w' \in \oC(v_2,-v_1)$, \ $w'' \in \oC(-v_2,v_1)$,
and $\det(w',w'')>0$, then $v_1,v_2 \in \oC(-w',-w'')$.
\end{enumerate}
See Figure \ref{fig:wwprime}.
\end{Lemma}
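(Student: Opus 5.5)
The natural approach is to reduce to the standard basis. The hypothesis $\det(v_1,v_2)>0$ says that $v_1,v_2$ form a positively oriented basis of $\R^2$. Every statement in the lemma is invariant under an orientation-preserving linear isomorphism $L$. Indeed, $L$ maps $\oC(u,v)$ onto $\oC(Lu,Lv)$, and since $\det L>0$ it preserves the sign of $\det(w',w'')$. So we may apply the linear map taking $v_1\mapsto \binom{1}{0}$ and $v_2\mapsto\binom{0}{1}$. Its determinant is $1/\det(v_1,v_2)>0$, so we may assume $v_1=e_1$ and $v_2=e_2$. I would also use throughout the elementary fact that, for non-collinear $u,v$, $x=au+bv$ with $a=\det(x,v)/\det(u,v)$ and $b=\det(u,x)/\det(u,v)$. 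Hence $x\in\oC(u,v)$ if and only if these two determinants have the same sign as $\det(u,v)$.

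For part~(1), $w\in\oC(-e_1,-e_2)$ means $w=(-a,-b)$ with $a,b>0$. We want $e_1=\alpha(-e_2)+\beta(-w)=(\beta a,\ \beta b-\alpha)$ with $\alpha,\beta>0$. This forces $\beta=1/a>0$ and $\alpha=b/a>0$, which finishes the case. We should also note that $-e_2$ and $-w$ are non-collinear because $a\neq 0$, so the cone $\oC(-v_2,-w)$ makes sense.

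For part~(2), write $w'=(-p,q)$ and $w''=(r,-s)$ with $p,q,r,s>0$. Then
$$\det(w',w'')=ps-qr>0,$$
and in particular $w',w''$ are non-collinear. Using the determinant criterion with $u=-w'$ and $v=-w''$, note that $\det(u,v)=\det(w',w'')>0$. So $x\in\oC(-w',-w'')$ if and only if $\det(x,-w'')>0$ and $\det(-w',x)>0$, that is, $\det(w'',x)>0$ and $\det(x,w')>0$.

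For $x=e_1$ these conditions read $\det(w'',e_1)=s>0$ and $\det(e_1,w')=q>0$. For $x=e_2$ they read $\det(w'',e_2)=r>0$ and $\det(e_2,w')=p>0$. All four hold by hypothesis.

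There is no real obstacle here. The only care needed is in the reduction step: checking that cones and the sign of determinants are preserved by orientation-preserving linear maps, and getting the signs right in the cone-membership criterion. Interestingly, the hypothesis $\det(w',w'')>0$ is used only to make $\det(-w',-w'')$ positive, which fixes the orientation in the criterion.
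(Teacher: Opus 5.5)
Your proof is correct and follows essentially the same route as the paper's sketch: write the hypotheses as positive linear combinations and solve for $v_1$ (and $v_2$) in terms of the new generators. Your normalization $v_1=e_1$, $v_2=e_2$ and your Cramer's-rule membership criterion make the paper's ``elementary linear algebra'' explicit; for example, in part~(2) you get $v_1=\frac{s(-w')+q(-w'')}{ps-qr}$. You also check the non-collinearity needed for $\oC(-v_2,-w)$ and $\oC(-w',-w'')$ to be defined, which the paper leaves implicit.
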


\begin{proof}[Sketch of proof]
For Part \eqref{newcones1},
write the assumption as a positive linear combination, and rearrange the terms
to obtain the conclusion.
For Part \eqref{newcones2}, 
write the assumptions as positive linear combinations,
and use elementary linear algebra to express $v_1,v_2$ in terms of $-w',-w''$.
\end{proof}


\begin{figure}[H]
\begin{tikzpicture}
\draw[thick,red,->] (0,0) -- (2,0) node[anchor=west] {$v_1$};
\draw[thick,dashed,->] (0,0) -- (-2,0) node[anchor=east] {$-v_1$};
\draw[thick,red,->] (0,0) -- (0,2) node[anchor=south] {$v_2$};
\draw[thick,dashed,->] (0,0) -- (0,-2) node[anchor=north] {$-v_2$};
\draw[thick,dashed,->] (0,0) -- (1.73,1) node[anchor=south west] {$-w$};
\draw[thick,blue,->] (0,0) -- (-1.73,-1) node[anchor=north east] {$w$};
\end{tikzpicture}
\qquad
\begin{tikzpicture}
\draw[thick,red,->] (0,0) -- (2,0) node[anchor=west] {$v_1$};
\draw[thick,dashed,->] (0,0) -- (-2,0) node[anchor=east] {$-v_1$};
\draw[thick,red,->] (0,0) -- (0,2) node[anchor=south] {$v_2$};
\draw[thick,dashed,->] (0,0) -- (0,-2) node[anchor=north] {$-v_2$};
\draw[thick,blue,->] (0,0) -- (-1.73,1) node[anchor=south east] {$w'$};
\draw[thick,blue,->] (0,0) -- (1,-1.73) node[anchor=north west] {$w''$};
\draw[thick,dashed,->] (0,0) -- (1.73,-1) node[anchor=north west] {$-w'$};
\draw[thick,dashed,->] (0,0) -- (-1,1.73) node[anchor=south east] {$-w''$};
\end{tikzpicture}
\caption{Configurations for Lemma~\ref{new cones}}
\label{fig:wwprime}
\end{figure}

\begin{Lemma}[\textbf{Unimodular linear pair\footnote{
A \emph{linear pair} is a pair of angle that have a common ray
and whose other rays are opposite.
See Figure~\ref{fig:between-linear_pair},
where the angles formed by $u,v$ and by $v,-u$ are also \emph{unimodular}
in the sense that the primitive generators to their edges
satisfy $\det(u,v) = \det(v,-u) = 1$.} lemma}]
\labell{lem:linear pair}
Let $\Sigma^1$ be the set of primitive inner normal vectors
to the edges of a unimodular polygon.
Let $u$, $v$ be lattice vectors such that ${\det(u,v)=1}$
and such that $v$ is not in $\Sigma^1$.
Then at least one of the sets 
$\Sigma^1 \cap \oC(u,v)$ and $\Sigma^1 \cap \oC(v,-u)$
is empty.  
\comment{\\ Consequence of Charlene's Lemma 3.22;
equivalent to Oda's \cite[p.~41, Sublemma~8.4]{oda}.} 
\end{Lemma}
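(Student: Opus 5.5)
We reduce to the standard basis and then show that the two primitive inner normals adjacent to $v$ in the cyclic order would have determinant at least $2$.

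\emph{Normalization.} By the discussion preceding Proposition~\ref{prop:Ax+bProperties}, since $\det(u,v)=1$ there is some $A\in\GL(2,\Z)$ with $\det A = 1$ carrying $u,v$ to $\binom{1}{0},\binom{0}{1}$. We apply $(A^{-1})^T$-type reasoning to the whole configuration, i.e.\ we apply $A$ to all of $\Sigma^1$. Because $\det A=1$, $A$ preserves determinants of pairs of vectors, the counterclockwise order, primitivity and lattice membership. So we may assume $u=\binom{1}{0}$ and $v=\binom{0}{1}$. Then $\oC(u,v)$ is the open first quadrant and $\oC(v,-u)$ is the open second quadrant. Suppose, for contradiction, that both $\Sigma^1\cap\oC(u,v)$ and $\Sigma^1\cap\oC(v,-u)$ are nonempty.

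\emph{Locating the neighbours of $v$.} Enumerate $\Sigma^1=\{v_1,\dots,v_n\}$ in counterclockwise order. Unimodularity gives $\det(v_i,v_{i+1})=1>0$, so each consecutive pair spans an angle strictly less than $\pi$. The open angular sectors between consecutive normals, together with the rays through the normals, cover the plane. Since $v\notin\Sigma^1$, the ray of $v$ lies strictly inside some sector, between a consecutive pair $w=v_i$ and $w'=v_{i+1}$.

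We claim $w$ is in the open first quadrant. Going clockwise from $v$, the first normal encountered is $w$. There is a normal $z$ in the open first quadrant, at angle in $(0,\pi/2)$, and we reach $z$ before leaving that quadrant. Hence the angle of $w$ lies in $[\arg z,\pi/2)\subset(0,\pi/2)$. Symmetrically, $w'$, the first normal counterclockwise from $v$, lies in the open second quadrant.

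\emph{Contradiction.} Write $w=\binom{p}{q}$ and $w'=\binom{-r}{s}$ with $p,q,r,s$ positive integers. Then
$$\det(w,w') = ps+qr \geq 2,$$
which contradicts $\det(v_i,v_{i+1})=1$. Hence at least one of the two sets is empty.

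The main point needing care is the step identifying the clockwise and counterclockwise neighbours of $v$ as lying in the open quadrants. This relies on the cyclic order and on the fact that consecutive angles are less than $\pi$; alternatively, one could phrase it via Lemma~\ref{new cones}.
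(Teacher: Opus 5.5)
Your proof is correct and is essentially the paper's argument: the two normals flanking the ray of $v$ are consecutive, so their determinant is $1$, yet it equals a sum of two products of positive integers ($ps+qr$ for you, $ac+bd$ in the paper), hence is at least $2$. The differences are cosmetic: you normalize to the standard basis and take the cyclic neighbours of $v$, then show they lie in the open quadrants, whereas the paper takes the last element of $\Sigma^1\cap\oC(u,v)$ and the first of $\Sigma^1\cap\oC(v,-u)$ and notes they are consecutive because $v\notin\Sigma^1$ (both versions tacitly use that $v$ is primitive, so no other element of $\Sigma^1$ lies on its ray).
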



\begin{proof}
Otherwise, let $w=au + bv$ with $a,b > 0$ be the last vector 
in $\Sigma^1 \cap \oC(u,v)$
in counterclockwise order,
and let $w'=cv + d(-u)$ with $c,d > 0$ be the first vector 
in $\Sigma^1 \cap \oC(v,-u)$
in counterclockwise order. See Figure \ref{fig:between-linear_pair}.
Because $v$ is not in $\Sigma^1$, 
the vectors $w$ and $w'$ are consecutive vectors in $\Sigma^1$
in counterclockwise order, so $\det(w,w') = 1$.
This contradicts the fact that 
$\det(w,w') = ac+bd$ with $a,b,c,d$ positive integers.

\begin{figure}[H]
\begin{tikzpicture}
\draw[thick,dashed,->] (0,0) -- (2,0) node[anchor=west] {$u$};
\draw[thick,dashed,->] (0,0) -- (0,2) node[anchor=south] {$v$};
\draw[thick,dashed,->] (0,0) -- (-2,0) node[anchor=east] {$-u$};
\draw[thick,blue,->] (0,0) -- (-1.2,1.7) node[anchor=south east] {$w'$};
\draw[thick,blue,->] (0,0) -- (1.0,1.8) node[anchor=south west] {$w$};
\draw (-1.4,0.8) node {\rotatebox{25}{$\iddots$}};
\draw (1.4,0.8) node {\rotatebox{-25}{$\ddots$}};
\end{tikzpicture}
\caption{Impossible configuration for Lemma~\ref{lem:linear pair}}
\label{fig:between-linear_pair}
\end{figure}
\end{proof}

We now recall the result from \cite[Page~40, Lemma~8.3]{oda},
and we give a self-contained proof (see Remark~\ref{rk:opposite normals}):

\begin{Proposition}
\labell{no opposite implies triangle}
Let $\Sigma^1$ be the set of primitive inner normal vectors
to the edges of a unimodular polygon.
Suppose that $\Sigma^1$ does not contain any two opposite vectors.
Then $\Sigma^1$ contains exactly three vectors.
\comment{Variation of Charlene's Proposition 3.23.}
\end{Proposition}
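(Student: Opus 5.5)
We argue by contradiction, using Lemma~\ref{lem:linear pair} together with Lemma~\ref{new cones}. Every polygon has at least three edges, so it suffices to show that if $\Sigma^1$ has $n \geq 4$ elements and no two of them are opposite, we reach a contradiction. Enumerate $\Sigma^1$ counterclockwise as $v_1,\ldots,v_n$. Since the polygon is bounded, the inner normals positively span $\R^2$, so no open half-plane through the origin contains all of them. Because $\det(v_1,v_2)=1$, after a lattice congruence (Proposition~\ref{prop:Ax+bProperties}) we may assume $v_1=\binom{1}{0}$ and $v_2=\binom{0}{1}$. By hypothesis $-v_1,-v_2\notin\Sigma^1$.

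Next I apply Lemma~\ref{lem:linear pair} to the pair $u=v_2$, $v=-v_1$, which satisfies $\det(v_2,-v_1)=1$ and $-v_1\notin\Sigma^1$. It says that $\Sigma^1\cap\oC(v_2,-v_1)$ or $\Sigma^1\cap\oC(-v_1,-v_2)$ is empty. Similarly, with $u=-v_1$ and $v=-v_2$ (determinant $1$, and $-v_2\notin\Sigma^1$), one of $\Sigma^1\cap\oC(-v_1,-v_2)$ and $\Sigma^1\cap\oC(-v_2,v_1)$ is empty. The remaining vectors $v_3,\ldots,v_n$ lie in the union of the three open cones $\oC(v_2,-v_1)$, $\oC(-v_1,-v_2)$, $\oC(-v_2,v_1)$, since they are not on the rays of $\pm v_1$, $\pm v_2$ and none lies in $\oC(v_1,v_2)$, because $v_1,v_2$ are consecutive. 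This leaves two cases.

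\emph{Case 1: the third quadrant $\oC(-v_1,-v_2)$ contains some $w\in\Sigma^1$.} Then the other two cones are empty, so all of $v_3,\ldots,v_n$ lie in $\oC(-v_1,-v_2)$. Unimodularity gives $\det(v_2,v_3)=1$, which forces $v_3=\binom{-1}{y}$ with $y<0$. Likewise $\det(v_n,v_1)=1$ forces $v_n=\binom{x}{-1}$ with $x<0$. If $n\geq4$, consecutive vectors $v_3,\ldots,v_n$ all lie in the third quadrant with determinant $1$. I expect to derive a contradiction from this by reapplying the reduction to the pair $(v_2,v_3)$: normalize $v_2,v_3$ to the standard basis and use Lemma~\ref{new cones}\eqref{newcones1} to see that $v_1$ lands in the analogous third-quadrant cone $\oC(-v_2,-v_3)$. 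Then all of $v_4,\ldots,v_n,v_1$ must lie there. Iterating shows the normals must wind too little around the origin to positively span $\R^2$ unless $n=3$. A cleaner route is the direct check: with $v_3=\binom{-1}{y}$ and $v_n=\binom{x}{-1}$ and intermediate vectors strictly between them, the determinant-$1$ conditions cannot all hold. Concretely, the chain $v_3,\ldots,v_n$ inside the quadrant spanned by $-v_1,-v_2$ would need $\det(v_3,v_n)$ small, and I would compute $\det(v_3,v_n)=1-xy\leq 0$, while a counterclockwise chain of angle less than $\pi$ forces it to be positive.

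\emph{Case 2: the third quadrant contains no element of $\Sigma^1$.} Then all of $v_3,\ldots,v_n$ lie in $\oC(v_2,-v_1)\cup\oC(-v_2,v_1)$. Both cones must be nonempty, for otherwise everything lies in a closed half-plane and the normals fail to positively span $\R^2$. Let $w'$ be the last element in $\oC(v_2,-v_1)$ and $w''$ the first in $\oC(-v_2,v_1)$. These are consecutive, so $\det(w',w'')=1>0$. Lemma~\ref{new cones}\eqref{newcones2} then gives $v_1,v_2\in\oC(-w',-w'')$. Now I use the pair $(w',w'')$ in place of $(v_1,v_2)$: the cone $\oC(-w',-w'')$ plays the role of the third quadrant and contains $v_1\in\Sigma^1$, so we are in Case 1 for this new consecutive pair. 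Since $-w',-w''\notin\Sigma^1$, Case 1 yields the contradiction.

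The main obstacle is making Case 1 rigorous, namely ruling out a chain of at least two unimodular consecutive vectors confined to the open third quadrant. I would try first to show that unimodularity forces $v_3=-v_1-v_2$ and $v_n=-v_1-v_2$, so that $v_3=v_n$ and hence $n=3$. Indeed, $\det(v_2,v_3)=1$ gives $v_3=\binom{-1}{y}$, and $v_3$ must be the next normal after $v_2$. Applying Lemma~\ref{lem:linear pair} to $u=v_3$, $v=-v_2$ should pin $y=-1$.
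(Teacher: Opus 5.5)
Your proof is correct, and its skeleton matches the paper's. The two applications of Lemma~\ref{lem:linear pair} in your case split are exactly Step~1. Your Case~2, which passes to the consecutive pair $w',w''$ via Lemma~\ref{new cones}\eqref{newcones2}, is exactly Step~2.

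Where you genuinely differ is in closing Case~1. Your ``direct check'' is already a complete proof, so the ``main obstacle'' you flag at the end is not one. The conditions $\det(v_2,v_3)=1$ and $\det(v_n,v_1)=1$ give $v_3=\binom{-1}{y}$ and $v_n=\binom{x}{-1}$ with integers $x,y\le -1$. Hence $\det(v_3,v_n)=1-xy\le 0$, with equality only when $v_3=v_n=\binom{-1}{-1}$. If $n\ge 4$, then $v_3\neq v_n$ both lie in the open third quadrant, and $v_n$ follows $v_3$ counterclockwise by an angle less than $\pi/2$. That forces $\det(v_3,v_n)>0$, a contradiction.

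The paper closes this case without coordinates. Lemma~\ref{new cones}\eqref{newcones1} puts $v_1$ in $\oC(-v_2,-v_3)$. Then Lemma~\ref{lem:linear pair}, applied to $v_3,-v_2,-v_3$, shows $\Sigma^1$ misses $\oC(v_3,-v_2)$, so the normal after $v_3$ must be $v_1$. This is what your last-paragraph idea amounts to. Note that the lemma does not ``pin $y=-1$'' directly: it shows that the successor of $v_3$ is $v_1$, and $y=-1$ then follows from $v_3=v_n$.

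Your computation is shorter and needs neither lemma for this step. The paper's version keeps all three steps in one uniform cone language.

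In a write-up, drop the ``iterating \ldots\ wind too little'' sketch, which as written is not an argument. It could be finished by noting that $\oC(-v_2,-v_3)\subset\{x>0\}$ is disjoint from $\oC(-v_1,-v_2)$, so no $v_4,\ldots,v_n$ can exist. Instead, present the determinant check as the proof of Case~1.
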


{
\begin{proof}
Enumerate the elements of $\Sigma^1$ in counterclockwise consecutive order
as $v_1,v_2,\ldots,v_n$,
and let $v_0 := v_1$ and $v_{n+1} := v_1$.
We proceed in three steps.

\smallskip\noindent\textbf{Step 1:}
\emph{For any two consecutive vectors $v_j,v_{j+1}$ in $\Sigma^1$,
either $\Sigma^1$ meets their opposite cone $\oC(-v_j,-v_{j+1})$
and does not meet their adjacent cones 
$\oC(v_{j+1},-v_j)$ nor $\oC(-v_{j+1},v_j)$,
or $\Sigma^1$ meets both of their adjacent cones
$\oC(v_{j+1},-v_j)$ and $\oC(-v_{j+1},v_j)$
and does not meet their opposite cone $\oC(-v_j,-v_{j+1})$.
}

\smallskip
Without loss of generality, we may focus on the consecutive vectors $v_1$
and $v_2$, and we may assume that $v_1=(1,0)$ and $v_2=(0,1)$.
Figure \ref{fig:dichotomy}
illustrates the two exclusive possibilities that we would like to obtain.

Let $v_3$ be the vector in $\Sigma^1$ that comes right after $v_2$,
and let $v_0$ be the vector in $\Sigma^1$ that comes right before $v_1$.
Because $\det(v_2,v_3)$ and $\det(v_0,v_1)$ are equal to $1$, hence positive,
$v_3$ lies to the left of the line through $\pm v_2$
and $v_0$ lies below the line through $\pm v_1$.
Because $\Sigma^1$ does not contain $-v_1$ nor $-v_2$,
we conclude that $\Sigma^1$ 
meets at least one of $\oC(v_2,-v_1)$ and $\oC(-v_1,-v_2)$
and at least one of $\oC(-v_1,-v_2)$ and $\oC(-v_2,v_1)$.

Finally, if $\Sigma^1$ meets $\oC(-v_1,-v_2)$,
then it cannot meet $\oC(v_2,-v_1)$ nor $\oC(-v_2,v_1)$,
by Lemma~\ref{lem:linear pair}
applied to $v_2$, $-v_1$, $-v_2$
and to $-v_1$, $-v_2$, $v_1$, respectively.  


\begin{figure}[H]
\begin{tikzpicture}
\draw[thick,red,->] (0,0) -- (2,0) node[anchor=west] {$v_{1}$};
\draw[thick,dashed,->] (0,0) -- (-2,0) node[anchor=east] {$-v_{1}$};
\draw[thick,red,->] (0,0) -- (0,2) node[anchor=south] {$v_{2}$};
\draw[thick,dashed,->] (0,0) -- (0,-2) node[anchor=north] {$-v_{2}$};
\draw[thick,blue,->] (0,0) -- (-1.73,1) node[anchor=south east] {};
\draw[thick,blue,->] (0,0) -- (1,-1.73) node[anchor=north west] {};
\end{tikzpicture}
\qquad
\begin{tikzpicture}
\draw[thick,red,->] (0,0) -- (2,0) node[anchor=west] {$v_{1}$};
\draw[thick,dashed,->] (0,0) -- (-2,0) node[anchor=east] {$-v_{1}$};
\draw[thick,red,->] (0,0) -- (0,2) node[anchor=south] {$v_{2}$};
\draw[thick,dashed,->] (0,0) -- (0,-2) node[anchor=north] {$-v_{2}$};
\draw[thick,blue,->] (0,0) -- (-1.73,-1) node[anchor=south east] {};
\end{tikzpicture}
\caption{Dichotomy for Step 1} 
\label{fig:dichotomy}
\end{figure}

\smallskip\noindent\textbf{Step 2:}
\emph{There exist two consecutive vectors $v_j,v_{j+1}$ in $\Sigma^1$
such that $\Sigma^1$ meets their opposite cone $\oC(-v_j,-v_{j+1})$.}


\smallskip
If $\Sigma^1$ meets $\oC(-v_1,-v_{2})$, then the consecutive vectors 
$v_1$, $v_2$ 
are as required.
Otherwise, let $v_j$ be the last vector in $\Sigma^1$ before $-v_1$,
and let $v_{j+1}$ be the first vector in $\Sigma^1$ after $-v_2$;
see Figure~\ref{fig:oppositeVectors1}.
%
By Step 1,
$v_j \in \oC(v_2,-v_1)$ and $v_{j+1} \in \oC(-v_2,v_1)$.
By Lemma~\ref{new cones}\eqref{newcones2}, 
$v_1,v_2 \in \oC(-v_j,-v_{j+1})$.
So the consecutive vectors $v_j$ and $v_{j+1}$ 
are as required.

\begin{figure}[H]
\begin{tikzpicture}
\draw[thick,red,->] (0,0) -- (2,0) node[anchor=west] {$v_{1}$};
\draw[thick,dashed,->] (0,0) -- (-2,0) node[anchor=east] {$-v_{1}$};
\draw[thick,red,->] (0,0) -- (0,2) node[anchor=south] {$v_{2}$};
\draw[thick,dashed,->] (0,0) -- (0,-2) node[anchor=north] {$-v_{2}$};
\draw[thick,blue,->] (0,0) -- (-1.73,1) node[anchor=south east] {$v_{j}$};
\draw[thick,dashed,->] (0,0) -- (1.73,-1) node[anchor=north west] {$-v_{j}$};
\draw[thick,blue,->] (0,0) -- (1,-1.73) node[anchor=north west] {$v_{j+1}$};
\draw[thick,dashed,->] (0,0) -- (-1,1.73) node[anchor=south east] {$-v_{j+1}$};
\end{tikzpicture}
\caption{Configuration for Step 2} \label{fig:oppositeVectors1}
\end{figure}

\smallskip\noindent\textbf{Step 3:}
\emph{$\Sigma^1$ contains exactly three vectors.}


\smallskip
By Step 2,
without loss of generality we may assume 
that $\Sigma^1$ meets $\oC(-v_{1},-v_{2})$;
see Figure~\ref{fig:oppositeVectors9}.
By Step 1,
it follows that $\Sigma^1$ 
does not meet $\oC(v_2,-v_1)$ nor $\oC(-v_2,v_1)$.

Let $v_3$ be the next vector in $\Sigma^1$ after $v_2$.
Then $v_3 \in \oC(-v_1,-v_2)$.
By Lemma \ref{new cones}\eqref{newcones1}, 
this implies that $v_1 \in \oC(-v_2,-v_3)$.

By 
Lemma~\ref{lem:linear pair}
for $v_3,-v_2,-v_3$,
and because $\Sigma^1$ meets $\oC(-v_2,-v_3)$ (in the vector $v_1$),
we conclude that $\Sigma^1$ does not meet $\oC(v_3,-v_2)$.
But $\Sigma^1$ also does not contain $-v_2$ nor meets $\oC(-v_2,v_1)$.
So the next vector in $\Sigma^1$ after $v_3$ is $v_1$,
and so $\Sigma^1 = \{v_1,v_2,v_3\}$.

\begin{figure}[H]
\begin{tikzpicture}
\draw[thick,red,->] (0,0) -- (2,0) node[anchor=west] {$v_{1}$};
\draw[thick,dashed,->] (0,0) -- (-2,0) node[anchor=east] {$-v_{1}$};
\draw[thick,red,->] (0,0) -- (0,2) node[anchor=south] {$v_{2}$};
\draw[thick,dashed,->] (0,0) -- (0,-2) node[anchor=north] {$-v_{2}$};
\draw[thick,blue,->] (0,0) -- (-1.73,-1) node[anchor=north east] {$v_{3}$};
\draw[thick,dashed,->] (0,0) -- (1.73,1) node[anchor=south west] {$-v_{3}$};
\end{tikzpicture}
\caption{Configuration for Step 3} \label{fig:oppositeVectors9}
\end{figure}
\end{proof}
}

\begin{Corollary} \labell{prop:hirzebruchTrapezoidCongruence}
Every unimodular quadrilateral is lattice congruent to a standard trapezoid.
\comment{Charlene's Proposition 3.18.}
\end{Corollary}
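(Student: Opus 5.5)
The plan is to combine Proposition~\ref{no opposite implies triangle} with Proposition~\ref{prop:parallel-then-standard}.

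Let $P$ be a unimodular quadrilateral, and let $\Sigma^1$ be the set of primitive inner normal vectors to its edges. The first step is to check that $\Sigma^1$ has exactly four elements. This holds because the minimal expression \eqref{intersection} of $P$ as an intersection of half-planes has $n=4$, and distinct edges have distinct primitive inner normals. Indeed, two edges with the same primitive inner normal would lie on parallel lines with $P$ on the same side of both, and then one of the two half-plane inequalities would be redundant, contradicting minimality.

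Next, since $\Sigma^1$ has four elements and not three, the contrapositive of Proposition~\ref{no opposite implies triangle} says that $\Sigma^1$ contains two opposite vectors. The edges of $P$ with these normals lie on parallel lines, so $P$ is a trapezoid in the sense defined before Proposition~\ref{prop:parallel-then-standard}. Proposition~\ref{prop:parallel-then-standard} then gives that $P$ is lattice congruent to a standard trapezoid, which finishes the proof.

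No step here is a real obstacle. The only point needing care is the first one, namely making sure that ``four edges'' translates into ``$|\Sigma^1| = 4$'', so that Proposition~\ref{no opposite implies triangle} can be applied.
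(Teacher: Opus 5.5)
Your proposal is correct and follows the paper's proof: Proposition~\ref{no opposite implies triangle} gives two opposite primitive normals, so $P$ is a trapezoid, and Proposition~\ref{prop:parallel-then-standard} then finishes the argument. Your extra check that a quadrilateral's four edges have four distinct primitive inner normals is sound, and the paper leaves that step implicit.
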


\begin{proof}
Let $P$ be a unimodular quadrilateral.
Let $\Sigma^1$ be the set of primitive inner normal vectors to its edges.
By Proposition~\ref{no opposite implies triangle},
$\Sigma^1$ contains two opposite vectors.
By Proposition~\ref{prop:parallel-then-standard},
$P$ is lattice congruent to a standard trapezoid.
\end{proof}

\begin{Corollary} \labell{cor:area4}
Let $P$ be a unimodular halfway-lattice quadrilateral.
Let $A$ be its area, and let $I$ be the number of lattice points
in its interior.  Then $$A = I.$$
\end{Corollary}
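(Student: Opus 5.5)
The argument will be a short reduction to the trapezoid case that is already proved. Let $P$ be a unimodular halfway-lattice quadrilateral, and let $\Sigma^1$ be the set of primitive inner normal vectors to its edges. Since $P$ has $n=4$ edges, $\Sigma^1$ has four elements. Proposition~\ref{no opposite implies triangle} then forces $\Sigma^1$ to contain two opposite vectors, because otherwise $\Sigma^1$ would have exactly three elements. So $P$ has two parallel edges, i.e.\ $P$ is a unimodular halfway-lattice trapezoid in the sense of Section~\ref{sec:triangles}.

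Once $P$ is known to be a trapezoid, Proposition~\ref{prop:area4-parallel} applies directly and gives $A = I$. Alternatively, one can argue through Corollary~\ref{prop:hirzebruchTrapezoidCongruence}: $P$ is lattice congruent to a standard trapezoid $Q$. By Proposition~\ref{prop:Ax+bProperties}, $Q$ is again a unimodular halfway-lattice polygon, and it has the same area and the same number of interior lattice points as $P$. Proposition~\ref{prop:area4-parallel} for $Q$ then gives the result. Either route works, and I would present the first since it is shorter.

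There is no real obstacle here; all the content lies in the earlier results. The one point to check is that the hypotheses match: the halfway-lattice condition is only needed when applying Proposition~\ref{prop:area4-parallel}, while Proposition~\ref{no opposite implies triangle} uses only unimodularity. I would also note that ``quadrilateral'' means exactly four edges in the minimal half-plane description, so that $|\Sigma^1| = 4 \neq 3$ and Proposition~\ref{no opposite implies triangle} indeed yields the opposite pair.
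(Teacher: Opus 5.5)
Your proposal is correct and matches the paper's proof: since $|\Sigma^1| = 4 \neq 3$, Proposition~\ref{no opposite implies triangle} gives two opposite normals, so $P$ is a trapezoid and Proposition~\ref{prop:area4-parallel} yields $A = I$. Your alternative route through Corollary~\ref{prop:hirzebruchTrapezoidCongruence} and Proposition~\ref{prop:Ax+bProperties} is also valid, but, as you say, it is not needed.
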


\begin{proof}
By Proposition~\ref{no opposite implies triangle},
the set $\Sigma^1$ of primitive normal vectors
to the edges of $P$ contains two opposite vectors.
So $P$ is a trapezoid, and the result follows
from Proposition~\ref{prop:area4-parallel}.
\end{proof}

\section{Unimodular halfway-lattice polygons with five or more edges}
\labell{sec:five}

To prove our Pick-type formula in the general case, 
we will use an inductive argument
that relies on the well-known fact that, up to lattice congruence, 
every unimodular polygon can be obtained from a standard triangle
or a standard trapezoid by a sequence of corner-choppings.
See Definition~\ref{def:chopping} and Lemma~\ref{lem:fillingChopping} below.

\begin{Remark}\labell{rk:blowups}
This well-known fact
follows from a property of the primitive normals to the edges
 --- Proposition \ref{prop:oppositeVectors} --
that appeared in Oda's \cite[Page~42]{oda},
in the same algebraic-geometric context
that we mentioned in Remark~\ref{rk:opposite normals}.
We provide
a self-contained proof of this property that is similar to the argument in 
\cite[Proof of Theorem~6.1]{karshon:periodic}.
\end{Remark}


We will start with a simple lemma: 

\begin{Lemma} \labell{lem:MicheleAudin}
Let $v_1$, $v_2$, and $v_3$ be vectors in $\Z^2$.
Suppose that $\det (v_1, v_2) = 1$.  
Then $\det ( v_2, v_3 ) = 1$ 
if and only if there exists an integer $m$ such that $v_1 + v_3 = m v_2$.
\comment{ \\ Charlene's Lemma 3.6.}
\end{Lemma}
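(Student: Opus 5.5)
The plan is to use that $\det(v_1,v_2)=1$ makes $v_1,v_2$ a basis of $\Z^2$, and then to compute $\det(v_2,v_3)$ in that basis. First I will write $v_3$ in terms of the basis. Since $v_1,v_2 \in \Z^2$ and $\det(v_1,v_2)=1$, the matrix with columns $v_1,v_2$ lies in $\GL(2,\Z)$. Its inverse therefore has integer entries, and so every lattice vector is an integer combination of $v_1$ and $v_2$. In particular we can write
$$ v_3 = a v_1 + b v_2 \qquad \text{with } a,b \in \Z. $$
Alternatively, Cramer's rule gives $a = \det(v_3,v_2)/\det(v_1,v_2)$ and $b=\det(v_1,v_3)/\det(v_1,v_2)$, which are integers because the denominator is $1$.

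Next I will compute the determinant using bilinearity and antisymmetry:
$$ \det(v_2,v_3) = a\det(v_2,v_1) + b\det(v_2,v_2) = -a. $$
Hence $\det(v_2,v_3)=1$ if and only if $a=-1$. The condition $a=-1$ says exactly that $v_3 = -v_1 + b v_2$, that is, $v_1+v_3 = b v_2$ with $b\in\Z$. This gives the forward direction with $m=b$.

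For the converse, suppose $v_1+v_3=mv_2$ for some integer $m$. Then
$$ \det(v_2,v_3)=\det(v_2,mv_2-v_1)=-\det(v_2,v_1)=\det(v_1,v_2)=1, $$
and this direction does not even use integrality. There is no real obstacle in the argument. The only point needing care is the justification that the coefficient $b$ is an integer, which is where the hypothesis $\det(v_1,v_2)=1$ with integer vectors is used, via Cramer's rule as above.
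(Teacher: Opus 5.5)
Your proof is correct and is essentially the paper's argument: writing $v_3=av_1+bv_2$ in the basis $v_1,v_2$ amounts to the paper's step of applying a lattice congruence that sends $v_1,v_2$ to the standard basis and reading off $x_3=-\det(v_2,v_3)=-1$. You also spell out the converse direction, which the paper leaves implicit.
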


\begin{proof}
Because $\det ( v_1, v_2 ) = 1$, 
up to lattice congruence, we may assume
that $\ds v_1 = \binom{1}{0}$ and $\ds v_2 = \binom{0}{1}$.
Write $\ds v_3 = \binom{x_3}{y_3}$.
Then $x_3 = -\det ( v_2, v_3 ) = -1$,
and so $\ds v_1 + v_3 = \binom{0}{y_3} = mv_2$ with $m=y_3$.
\end{proof}

Next, we recall the result from \cite[Page~42]{oda},
and we give a self-contained proof (see Remark~\ref{rk:blowups}):

\begin{Proposition} \labell{prop:oppositeVectors}
Let $P$ be a unimodular polygon with $n \geq 5 $ edges. 
Let $v_{1},\dots,v_{n}$ be the primitive inner normal vectors 
to the edges of $P$, in counterclockwise consecutive order, 
and let $v_0:=v_n$ and $v_{n+1} := v_1$.
Then there exists $1 \leq j \leq n$ such that $v_j = v_{j-1} + v_{j+1}$.
\comment{\\ Charlene's Proposition 3.23 Part 4(d)}
\end{Proposition}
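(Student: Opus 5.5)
The plan is to use an opposite pair of normals from Proposition~\ref{no opposite implies triangle}, then run a maximum argument on the heights of the normals on one side of that pair, using Lemma~\ref{lem:MicheleAudin} to get integer multipliers.

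\textbf{Setup.} Since $n \geq 5 > 3$, Proposition~\ref{no opposite implies triangle} shows that $\Sigma^1$ contains two opposite vectors $e$ and $-e$. The vectors strictly between $e$ and $-e$ in counterclockwise order form one chain, and those strictly between $-e$ and $e$ form the other. Each chain is nonempty, because opposite vectors cannot be consecutive: $\det(e,-e)=0\neq 1$. Together the two chains contain $n-2 \geq 3$ vectors, so one of them has at least two vectors. After a cyclic relabeling, and after applying a lattice congruence (Proposition~\ref{prop:Ax+bProperties}) taking $e$ to $\binom{1}{0}$, this chain is
$$w_0 = \binom{1}{0},\ w_1,\ \ldots,\ w_{k-1},\ w_k = \binom{-1}{0}, \qquad k \geq 3,$$
with consecutive $w_i$ consecutive in $\Sigma^1$. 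Let $y_i$ denote the second coordinate of $w_i$. The vectors $w_1,\ldots,w_{k-1}$ lie strictly between $\binom{1}{0}$ and $\binom{-1}{0}$ counterclockwise, so $y_i$ is a positive integer for $0<i<k$, while $y_0=y_k=0$.

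\textbf{Multipliers.} For $0<i<k$ we have $\det(w_{i-1},w_i)=\det(w_i,w_{i+1})=1$. By Lemma~\ref{lem:MicheleAudin} there is an integer $m_i$ with $w_{i-1}+w_{i+1}=m_i w_i$, and taking second coordinates gives
$$y_{i-1}+y_{i+1}=m_i y_i.$$
The left side is a sum of nonnegative integers, and it is positive unless both neighbours are endpoints. That would force $k=2$, which is excluded, so $m_i \geq 1$ for all $0<i<k$.

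\textbf{Maximum argument.} Let $Y=\max_{0<i<k} y_i$, and choose a maximal run of consecutive interior indices on which $y_i=Y$. Since $y_0=y_k=0<Y$, the run has a last index $i$ with $y_{i+1}<Y$, where $i+1$ may be the endpoint $k$. Then
$$m_i Y = y_{i-1}+y_{i+1} < Y + Y = 2Y,$$
so $m_i<2$, and combined with $m_i\geq 1$ this gives $m_i=1$. Hence $w_i = w_{i-1}+w_{i+1}$, which is the desired relation $v_j=v_{j-1}+v_{j+1}$ for the corresponding index $j$, since consecutive $w$'s are consecutive in $\Sigma^1$.

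The only delicate point is ensuring $m_i\geq 1$. This is exactly where the hypothesis $n\geq 5$ enters, by letting us pick a chain with $k\geq 3$. For a standard trapezoid with $k\geq 2$ both chains have length $2$, and the conclusion can indeed fail, so the hypothesis cannot be dropped.
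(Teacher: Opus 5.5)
Your proof is correct and follows the paper's route. You take an opposite pair from Proposition~\ref{no opposite implies triangle}, normalize to $\pm\binom{1}{0}$ with at least two normals strictly between them, get integer multipliers $m_i=(y_{i-1}+y_{i+1})/y_i$ from Lemma~\ref{lem:MicheleAudin}, and run a maximum argument on the heights $y_i$.

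The only difference is the last step. The paper uses that consecutive heights are coprime (since $\det(w_{i-1},w_i)=1$) to make both inequalities strict at a maximum $y_j\ge 2$, and it treats the all-ones case separately. You instead take the last index of a plateau of maxima, which needs only one strict inequality, so you avoid both the coprimality observation and the case split.
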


\begin{proof}
By Proposition~\ref{no opposite implies triangle},
there are two opposite vectors $v_{j}$ and $v_{k}$ in $\Sigma^{1}$. 
Because $n \geq 5$, we can re-index the vectors 
such that the opposite vectors are $v_1$ and $v_i$ 
and such that at least two of the vectors lie after $v_1$ and before $v_i$.
Up to lattice congruence, we may further assume that $\ds v_1 = \binom{1}{0}$
and $\ds v_i = \binom{-1}{0}$.
Moreover, $y_2 = \det(v_1,v_2) = 1$ and $y_{i-1} = \det(v_{i-1},v_{i}) = 1$.
See Figure~\ref{fig:oppositeVectors_part4}.

\begin{figure}[H]
\begin{tikzpicture}
\draw[thick,->] (0,0) -- (2,0) node[anchor=west] 
{$y_1=0$};
\draw[thick,->] (0,0) -- (0,2) node[anchor=south] {$(x_j,y_j)$};
\draw[thick,->] (0,0) -- (-2,0) node[anchor=east] 
{$y_i=0$};
\draw[thick,->] (0,0) -- (1.73,1) node[anchor=south west] 
{$y_{2}=1$};
\draw[thick,->] (0,0) -- (-1.73,1) node[anchor=south east] 
{$y_{i-1}=1$};
\draw (-0.5,1) node {$\dots$};
\draw (0.5,1) node {$\dots$};
\end{tikzpicture}
\caption{} \label{fig:oppositeVectors_part4}
\end{figure}

Write $\ds v_j = \binom{x_j}{y_j}$.
Then $y_1=y_i=0$, and $y_2,\ldots,y_{i-1}$ are positive integers. 
By this and Lemma~\ref{lem:MicheleAudin},
for each $1 < j < i$ there exists a positive integer $m_j$ such that 
\begin{equation} \labell{mj}
v_{j-1} + v_{j+1} = m_j v_j
\quad \text{ and } \quad
m_j = \frac{y_{j-1}+y_{j+1}}{y_j}.
\end{equation}

Because $ x_{j-1}y_{j} - x_{j}y_{j-1} = \det(v_{j-1},v_{j}) = 1$,
any two consecutive integers $y_{j-1}$ and $y_j$ are relatively prime.

If $y_2 = \ldots = y_{i-1} = 1$, then $y_1=0$ and $y_2=y_3=1$.
So in \eqref{mj} we obtain $m_2=1$, and we are done.
%
Otherwise, the maximal $y_j$ among $y_2,\ldots,y_{i-1}$ satisfies $y_j \geq 2$.
Since $y_j$ is maximal,
\begin{equation} \labell{ineq}
y_j \geq y_{j-1} \quad \text{ and } \quad y_j \geq y_{j+1}.
\end{equation}
But $y_j$ is relatively prime to $y_{j-1}$ and to $y_{j+1}$
and is $\geq 2$, so the inequalities in \eqref{ineq} must be strict;
adding them up and dividing by $y_j$, we obtain
$$ \frac{y_{j-1}+y_{j+1}}{y_j} < 2\,. $$
The left term, being a positive integer --- namely, $m_j$ ---
and less than $2$, must be equal to~$1$.
So $m_j=1$, and $v_{j-1} + v_{j+1} = v_j$.
\end{proof}
}

\begin{Definition}
\labell{def:chopping}
Let $Q$ be a unimodular polygon with $n-1$ edges. 
Let $H$ be a closed half-plane.
If $P=Q\cap H$ is a unimodular polygon with $n$ edges,
we say that $P$ is obtained by {\bf corner-chopping} $Q$.
See Figure~\ref{fig:chopping}.
\comment{Charlene's Definition 3.25}
\end{Definition}

{
\begin{figure}[H]
\centering
  \begin{tikzpicture}
 
    \coordinate (A) at (6,0);
    \coordinate (B) at (0,0);
    \coordinate (C) at (0,6);
    \coordinate (D) at (3,6);
 
    \coordinate (B1) at (2,0);
    \coordinate (B2) at (0,2);
 
    \draw [,-,fill=gray!50, opacity=0.25,gray!50, opacity=0.25] (A) -- (B) -- (C) -- (D) -- (A)--cycle;
 
    \draw [,-,fill=gray!50, opacity=0.25,gray!50, opacity=0.25] (A) -- (B1) -- (B2) -- (C) -- (D) -- (A)--cycle;
    \draw [thick,-,black] (A) -- (B) -- (C) -- (D) -- (A) node [above left] {};
 
    \coordinate (Q) at (3.5,5);
    \draw [thick,-,black] (Q) node [above right] {$Q$};
 
    \coordinate (P) at (2.5,3);
    \draw [thick,-,black] (P) node [above right] {$P$};

    \coordinate (R) at (.25,.25);
    \draw [thick,-,black] (R) node [above right] {$R$};
 
    \coordinate (vline1a) at (2,0);
    \coordinate (vline1b) at (0,2);
    \draw [thick,dashed,black] (vline1a) -- (vline1b) node [] {};
 
    \coordinate (vline2a) at (0,2);
    \coordinate (vline2b) at (-1,3);
    \draw [thick,-,black] (vline2a) -- (vline2b) node [below left] {$\partial H_{i}$};
 
    \coordinate (hline1a) at (3,-1);
    \coordinate (hline1b) at (2,0);
    \draw [thick,-,black] (hline1a) -- (hline1b) node [] {};
 
    \coordinate (v1a) at (4,0);
    \coordinate (v1b) at (4,1);
    \coordinate (v2a) at (1,1);
    \coordinate (v2b) at (2,2);
    \coordinate (v3a) at (0,4);
    \coordinate (v3b) at (1,4);
    \draw [ultra thick,-latex,blue] (v1a) -- (v1b) node [right] {$v_{i+1}$};
    \draw [ultra thick,-latex,blue] (v2a) -- (v2b) node [above left] {$v_{i}$};
    \draw [ultra thick,-latex,blue] (v3a) -- (v3b) node [above] {$v_{i-1}$};
 
  \end{tikzpicture}
  \caption{\comment{Charlene's Figure 14 -- modified}} \label{fig:chopping}
\end{figure}
}

\begin{Lemma}\labell{lem:fillingChopping}
Let $P$ be a unimodular halfway-lattice polygon with $n\geq 5$ edges. 
Then there exists a unimodular halfway-lattice polygon $Q$ with $n-1$ edges 
such that $P$ is obtained by corner-chopping~$Q$.
Moreover, the polygon $Q$ decomposes as a union $Q = P \cup R$ 
where $R$ is a halfway-lattice triangle 
and $P$ and $R$ meet along a common edge.
See Figure~\ref{fig:chopping}.
\comment{\\ Charlene's Lemmas 3.26 and 3.27.}
\end{Lemma}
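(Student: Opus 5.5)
By Proposition~\ref{prop:oppositeVectors}, there is an index $j$ with $v_j = v_{j-1} + v_{j+1}$. I plan to remove the edge with normal $v_j$ and let the two neighbouring edges extend until they meet. Write $P = \{x : \langle x, v_i\rangle + \lambda_i \geq 0,\ i=1,\dots,n\}$ with the $\lambda_i$ half-integers. Set
$$Q := \{ x : \langle x, v_i\rangle + \lambda_i \geq 0 \text{ for all } i \neq j\}.$$
Since $\det(v_{j-1},v_{j+1}) = \det(v_{j-1}, v_j - v_{j-1}) = \det(v_{j-1},v_j) = 1$, the remaining normals, in counterclockwise order, still have consecutive determinants equal to $1$. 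The constants are unchanged, so they remain half-integers.

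The first thing to check is that $Q$ is a bounded polygon whose edges are exactly the $n-1$ remaining edges. Here $v_{j-1}$ and $v_{j+1}$ satisfy $\det(v_{j-1},v_{j+1})=1>0$, so the lines $\langle x,v_{j-1}\rangle+\lambda_{j-1}=0$ and $\langle x,v_{j+1}\rangle+\lambda_{j+1}=0$ meet in a single point $p$. Consider the triangle $R$ bounded by these two lines and by the line of the $j$-th edge of $P$, on the side where $\langle x,v_j\rangle+\lambda_j\le 0$. I will show that this triangle is nondegenerate and bounded.

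The key computation is at the point $p$:
$$\langle p,v_j\rangle + \lambda_j = \langle p, v_{j-1}+v_{j+1}\rangle + \lambda_j = \lambda_j - \lambda_{j-1}-\lambda_{j+1}.$$
Since the $j$-th edge of $P$ has positive length, $p$ must violate the $j$-th inequality, so this quantity is negative. It is also an integer minus a half-integer, hence lies in $\tfrac12+\Z$. So $R$ is a genuine triangle with vertices $p$ and the two endpoints of the $j$-th edge. Its normals, up to sign, are $v_{j-1}$, $v_{j+1}$ and $-v_j$ (with constant $-\lambda_j$), so $R$ is a halfway-lattice triangle. It is in fact unimodular, since $\det(v_{j+1},-v_j) = \det(v_{j+1}, -v_{j-1}) = 1$ and similarly for the other pair.

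Next I verify that $Q = P\cup R$. A point of $Q$ either satisfies the $j$-th inequality, in which case it lies in $P$, or it does not. In the second case, the two neighbouring inequalities together with $\langle x,v_j\rangle+\lambda_j\le 0$ place it in $R$. Conversely, I need $R\subset Q$, that is, $R$ satisfies every other inequality. This follows by convexity: each of the three vertices of $R$ satisfies all inequalities $i\ne j$.

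For the two endpoints of the $j$-th edge this is clear, since they lie in $P$. For $p$, this is the step I expect to be the main obstacle. I would argue it as follows. The other edges of $P$ lie on the side of the $j$-th line away from $R$. Because $P$ is convex, the edges adjacent to the $(j-1)$-st and $(j+1)$-st edges bend inward, so the cone at $p$ spanned by the extended edges $j-1$ and $j+1$ contains $P$. Therefore each remaining half-plane $H_i$ contains that cone near its apex, and hence contains $p$.

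An alternative route avoids this geometric argument: take $Q$ as defined, note that it is an intersection of half-planes containing $P\cup R$, and observe that $P\cup R$ is convex because $R$ is exactly the region cut off. With either argument in hand, $Q$ is bounded, has the $n-1$ listed edges (none of them disappears, since each already has positive length inside $P\subset Q$), and is a unimodular halfway-lattice polygon. Finally, $P = Q\cap H_j$, and $P$ and $R$ meet along the $j$-th edge, which completes the proof.
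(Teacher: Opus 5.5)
Your construction is the paper's: choose $j$ with $v_j=v_{j-1}+v_{j+1}$, discard $H_j$, check that $\det(v_{j-1},v_{j+1})=1$, and note that the constants stay half-integers. The paper only asserts that $Q$ is a polygon with $n-1$ edges, and reads $Q=P\cup R$ off the figure. You try to prove this, but the step you single out, $p\in H_i$ for $i\neq j$ (equivalently $R\subseteq Q$), is not established by either of your arguments.

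\begin{itemize}
\item In the first argument, $P\subseteq K:=H_{j-1}\cap H_{j+1}$ and $P\subseteq H_i$ say nothing about $H_i$ containing $K$ near its apex $p$; the inclusions point the wrong way.
\item The alternative route is circular. Saying that the half-planes $H_i$, $i\neq j$, contain $P\cup R$ is exactly the claim $R\subseteq Q$. Convexity of $P\cup R$ is asserted rather than proved, and even granted, it would still need a supporting-line argument to give $R\subseteq H_i$.
\end{itemize}

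Boundedness of $Q$ and its having exactly $n-1$ edges do not depend on this step. They follow from $P\subseteq Q\subseteq P\cup R$, and you did prove the second inclusion. Only the ``moreover'' part depends on it, namely that $Q$ is the full union $P\cup R$. The main theorem needs that to get $A_Q=A+A_R$ and $I_Q=I+I_R$.

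The missing ingredient is the cyclic order of the normals. Let $a\in\del H_{j-1}$ and $b\in\del H_{j+1}$ be the endpoints of the $j$-th edge, and put $t:=\lambda_{j-1}+\lambda_{j+1}-\lambda_j$, which is $>0$ by your key computation. Then
\begin{itemize}
\item $\langle a-p,v_{j-1}\rangle=0$ and $\langle a-p,v_{j+1}\rangle=t$;
\item $\langle b-p,v_{j-1}\rangle=t$ and $\langle b-p,v_{j+1}\rangle=0$.
\end{itemize}
For $i\notin\{j-1,j,j+1\}$, write $v_i=\alpha v_{j-1}+\beta v_{j+1}$. The angle from $v_{j-1}$ to $v_{j+1}$ is less than $\pi$, and only $v_j$ lies between them in counterclockwise order. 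So $v_i$ lies outside the closed cone spanned by $v_{j-1}$ and $v_{j+1}$, which means $\alpha<0$ or $\beta<0$. In the paper's coordinates $v_{j-1}=(1,0)$, $v_{j+1}=(0,1)$, this just says that every other normal has a negative coordinate.

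Now
\[
\langle p,v_i\rangle+\lambda_i=(\langle a,v_i\rangle+\lambda_i)-\beta t=(\langle b,v_i\rangle+\lambda_i)-\alpha t .
\]
Both brackets are $\geq 0$ because $a,b\in P\subseteq H_i$, so $\langle p,v_i\rangle+\lambda_i>0$. With $p\in H_i$ for all $i\neq j$, your convexity argument on the vertices $a,b,p$ gives $R\subseteq Q$, and hence $Q=P\cup R$.
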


\begin{proof}
Let $H_{1},\dots,H_{n}$ be the smallest collection of closed half-planes 
such that $P = H_{1}\cap \dots \cap H_{n}$. 
Since $P$ is a halfway-lattice polygon, 
$\del H_1 , \ldots, \del H_n$ are halfway-lattice lines. 
Let $v_{1},\dots,v_{n}$ be the primitive inner normal vectors 
to $\del H_1, \ldots, \del H_n$; without loss of generality,
we may assume that they are enumerated in counterclockwise
consecutive order.  Let $v_0 := v_n$ and $v_{n+1} := v_1$.

By Proposition~\ref{prop:oppositeVectors}, $v_{i-1} + v_{i+1} = v_{i}$ 
for some $1\leq i \leq n$. 
Because $\det(v_{i-1},v_i) = 1$ and $v_i = v_{i-1}+v_{i+1}$, we have
$$ \det (v_{i-1},v_{i+1}) = \det (v_{i-1},v_i-v_{i-1}) = 1.$$
So up to lattice congruence we may assume without loss of generality
that $\ds v_{i-1} = \binom{1}{0}$ and $\ds v_{i+1} = \binom{0}{1}$, 
and that $\ds v_i = \binom{1}{1}$.  See Figure~\ref{fig:chopping}.
Moreover,
up to translation, we may furthermore assume that $H_{i-1}$
and $H_{i+1}$ are, respectively, the half-planes given by the equations
$\{ x \geq \frac{1}{2} \}$ and $\{ y \geq \frac{1}{2} \}$ in the $xy$-plane.
The half-plane $H_i$ is then given by $\{x+y \geq m+\frac{1}{2}\}$ 
for some $m \in \N$.

The intersection $Q := H_{1}\cap\dots H_{i-1}\cap H_{i+1}\cap\dots\cap H_{n}$
is then a convex polygon with $n-1$ edges,
whose edges lie on the halfway-lattice lines
$\del H_1,\ldots, \del H_{i-1}, \del H_{i+1}, \ldots, \del H_n$.
Thus, $Q$ is a halfway-lattice polygon.
The vectors $v_1,\ldots,v_{i-1},v_{i+1},\ldots,v_n$ 
are the primitive inner normal vectors to its edges,
in counterclockwise consecutive order.
We have shown that $\det (v_{i-1},v_{i+1}) = 1$.
Because $P$ is unimodular,
$\det (v_{j-1},v_j) = 1$ for all $1 \leq j \leq i-1$
and for all $i+2 \leq j \leq n$.
Thus, $Q$ is unimodular.
\end{proof}


\begin{Theorem}[The main theorem]\labell{thm:mainTheorem}
Let $P$ be a unimodular halfway-lattice polygon.
Let $A$ be its area, let $I$ be the number of lattice points 
in its interior, and let $V$ be its number of vertices.  Then 
\comment{Charlene's Theorem 3.29}
\begin{equation} \labell{Chu-formula}
A = I - \dfrac{V}{8} + \dfrac{1}{2}.
\end{equation}
\end{Theorem}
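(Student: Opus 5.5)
We argue by induction on the number $n$ of edges of $P$. For a polygon the number of vertices equals the number of edges, so $V=n$. The base cases are already done. For $n=3$, Proposition~\ref{prop:area3} gives $A = I + \frac{1}{8}$, which equals $I - \frac{3}{8} + \frac{1}{2}$. For $n=4$, Corollary~\ref{cor:area4} gives $A = I$, which equals $I - \frac{4}{8} + \frac{1}{2}$. So the formula \eqref{Chu-formula} holds whenever $n \le 4$. (A polygon has at least three edges.)

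For the inductive step, let $n \geq 5$ and assume \eqref{Chu-formula} for all unimodular halfway-lattice polygons with $n-1$ edges. By Lemma~\ref{lem:fillingChopping} there is a unimodular halfway-lattice polygon $Q$ with $n-1$ edges such that $Q = P \cup R$, where $R$ is a triangle meeting $P$ along a common edge. As in that proof, we may assume after a lattice congruence (Proposition~\ref{prop:Ax+bProperties}) that:
\begin{itemize}
\item $R$ is cut out by $x \geq \frac{1}{2}$, $y \geq \frac{1}{2}$ and $x+y \leq m+\frac{1}{2}$ for some $m \in \N$;
\item $P \subset Q$ is the part where $x + y \geq m + \frac12$.
\end{itemize}
Thus $R$ is a standard halfway-lattice triangle, so Proposition~\ref{prop:area3} gives $\mathrm{area}(R) = I_R + \frac{1}{8}$, where $I_R$ is the number of lattice points in the interior of $R$. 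The common edge of $P$ and $R$ lies on the line $x+y = m+\frac{1}{2}$, which contains no lattice points. Hence the lattice points in the interior of $Q$ split disjointly into those in the interior of $P$ and those in the interior of $R$, so $I_Q = I + I_R$. Areas add as well: $A_Q = A + \mathrm{area}(R)$.

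Applying the induction hypothesis to $Q$ gives
$$A + I_R + \tfrac{1}{8} = I + I_R - \tfrac{n-1}{8} + \tfrac{1}{2}.$$
Cancelling $I_R$ and moving $\frac18$ to the right, this simplifies to $A = I - \frac{n}{8} + \frac{1}{2}$, which is \eqref{Chu-formula}.

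The step that needs the most care is checking that $R$ really is a unimodular halfway-lattice triangle, so that Proposition~\ref{prop:area3} applies. One must confirm that the triangle removed by the chop is bounded by exactly the three lines $\partial H_{i-1}$, $\partial H_i$ and $\partial H_{i+1}$. This holds because $Q$ has consecutive edges on $\partial H_{i-1}$ and $\partial H_{i+1}$ meeting at the vertex $(\frac{1}{2},\frac{1}{2})$, and $P$ is obtained from $Q$ by cutting near that vertex with $H_i$. Since $P$ has $n$ edges, the line $\partial H_i$ meets the interiors of those two adjacent edges of $Q$. The three normals are $(1,0)$, $(0,1)$ and $(-1,-1)$ for $R$, so $R$ is standard. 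The remaining point is the disjointness of the lattice-point count, which follows from the halfway-lattice property of the cutting line.
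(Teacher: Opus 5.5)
Your proof is correct and follows the paper's argument essentially verbatim: induction on $n$, with base cases from Proposition~\ref{prop:area3} and Corollary~\ref{cor:area4}, and an inductive step that uses the corner-chopping decomposition $Q = P \cup R$ of Lemma~\ref{lem:fillingChopping}, the induction hypothesis for $Q$, and Proposition~\ref{prop:area3} for $R$. The paper leaves two points implicit, and you handle both correctly: that $R$ is in fact a standard (hence unimodular) triangle, and that $I_Q = I + I_R$ because the cutting line $x+y=m+\frac12$ contains no lattice points.
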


\begin{proof}
We argue by induction on the number $n$ of edges of $P$
(which is equal to the number $V$ of vertices).

\smallskip
\emph{Base Case:} 
If $n=3$, then the right hand side of \eqref{Chu-formula}
equals $\ds I + \frac{1}{8}$, 
and the result of the theorem follows from Proposition~\ref{prop:area3}.
If $n=4$, then the right hand side of \eqref{Chu-formula} equals $I$, 
and the result of the theorem follows from Corollary~\ref{cor:area4}.

\smallskip
\emph{Induction Step:} 
Suppose that $n \geq 5$ and that the result of the theorem 
holds for unimodular halfway-lattice polygons with fewer than $n$ edges. 
Let $P$ be a unimodular halfway-lattice polygon with $n$ edges.
By Lemma~\ref{lem:fillingChopping}, there exists a unimodular halfway-lattice
polygon $Q$ with $n-1$ edges such that $P$ is obtained by corner-chopping $Q$.
Moreover, the polygon $Q$ decomposes as a union $Q = P \cup R$ 
where $R$ is a halfway-lattice triangle 
and $P$ and $R$ meet along a common edge, as in Figure~\ref{fig:chopping}.
Denote by $A_Q$ and $A_R$ the areas of $Q$ and $R$, respectively,
and denote by $I_Q$ and $I_R$ the number of lattice points in their interiors.
Then 
$$ A_Q = A + A_R \quad \text{ and } \quad I_Q = I + I_R.$$
By the induction hypothesis, 
$$A_Q = I_Q - \frac{n-1}{8} + \frac{1}{2}.$$
By Proposition~\ref{prop:area3}, 
$$A_R = I_R + \frac{1}{8}.$$ 
So 
$$ A \, = \,  A_Q - A_R 
     \, = \, \left(I_Q - \frac{n-1}{8} + \frac{1}{2} \right) 
           - \left(I_R + \frac{1}{8} \right) 
     \, = \, \big(I_Q - I_R\big) - \frac{n}{8} + \frac{1}{2}
     \, = \, I - \frac{V}{8} + \frac{1}{2}\,,$$
as required.
\end{proof}

{\ifworkmode {\newpage} \else {\end{document}} \fi}

\begin{itemize}
\item
Alexander Caviedes-Castro and his student write ``halfway polytopes''.
\item
Should we define ``fan'' and ``unimodular fan''? No.
\end{itemize}

\bigskip

Definition: distinct primitive vectors $v_1,v_2, v_3$ 
are ordered \textbf{counterclockwise}
if at least two of $\det(v_1, v_2)$, $\det(v_2, v_3)$, $\det(v_3, v_1)$
are strictly positive and \textbf{clockwise}
if at least two of $\det(v_1, v_2)$, $\det(v_2, v_3)$, $\det(v_3, v_1)$
are strictly negative. 
\\ Lemma: any three distinct primitive vectors $v_1,v_2,v_3$
are ordered either counterclockwise or clockwise, and not both.
\\ (Definition: for $k \geq 4$, primitive vectors $v_1,v_2,\ldots,v_k$ 
are ordered \textbf{counterclockwise}
if, setting $v_0=v_k$ and $v_{k+1}=v_1$,
for any $1 \leq i \leq k$ the vectors $v_{i-1},v_i,v_{i+1}$
are distinct and ordered counterclockwise.)
\\ (Lemma: something about $v_1,v_2,v_3,v_4$.)

\bigskip

Our result should be a special case of more general results of Beck-Robins.

\bigskip

In connection with Proposition~\ref{no opposite implies triangle}
(existence of opposite vectors):
 \\ Miyake-Oda \cite[Theorem~11]{miyake-oda}.
 \\ Oda's later book: \cite[Theorem 1.28]{oda-book}.
 \\ Fulton \cite[Section~2.5 and Notes to Chapter 2]{fulton}.
 \\ check Barvinok's book and/or Beck+Robins?

The term ``linear pair'' is NOT commonly used in K--12 geometry.


\begin{thebibliography}{99}



\bibitem{audin}
Mich\`ele Audin, ``Torus actions on symplectic manifolds'',
Second revised edition.
Progress in Mathematics, \textbf{93}, Birkh\"auser Verlag, Basel, 2004.

\bibitem{beck-robins}
Matthias Beck and Sinai Robins,
``Computing the continuous discretely. Integer-point enumeration
in polyhedra.''
Second edition.  Undergraduate Texts in Mathematics.
Springer, New york, 2015.

\bibitem{brion-vergne}
Michel Brion and Mich\`ele Vergne,
\emph{Lattice points in simple polytopes}, 
J.\ Amer.\ Math.\ Soc.\ \textbf{10} (1997), no.~2, 371--392.

\bibitem{camarneiro-CdS-thesis}
Jo\~ao Camarneiro, \emph{Toric real loci}. 
M.Sc.\ thesis, supervised by Ana Cannas da Silva, E.T.H., 2024


\bibitem{duarte-ACC}
Natalia Juliana Lozano Duarte, \emph{Khovanskii-Pukhlikov formula
for half-way polytopes}.  M.Sc.\ thesis, supervised by 
Alexander Caviedes Castro, Pontificia Universidad Javeniana, 2025.


\bibitem{guillemin}
Victor Guillemin, \emph{Riemann-Roch for toric orbifolds},
J.\ Diff.\ Geom.\ \textbf{45} (1997), 53--73.

\bibitem{grossberg-karshon}
Michael Grossberg and Yael Karshon,
\emph{Equivariant index and the moment map for completely integrable
torus actions}, Adv.\ Math.\ \textbf{133} (1998), 185--223.

\bibitem{karshon:periodic}
Yael Karshon, \emph{Periodic Hamiltonian flows on four dimensional manifolds}.
Mem.\ Amer.\ Math.\ Soc.\ \textbf{672}, 1999.

\bibitem{karshon-sternberg-weitsman-PNAS}
Yael Karshon, Shlomo Sternberg, and Jonathan Weitsman,
\emph{The Euler-Maclaurin formula for simple integral polytopes}, 
Proc.\ Natl.\ Acad.\ Sci.\ USA \textbf{100}(2003), no.~2, 426--433.

\bibitem{karshon-sternberg-weitsman-remainder}
Yael Karshon, Shlomo Sternberg, and Jonathan Weitsman,
\emph{Euler-Maclaurin with remainder for a simple integral polytope}, 
Duke Math.\ J.\ \textbf{130} (2005), no.~3, 401--434.

\bibitem{karshon-sternberg-weitsman-exact}
Yael Karshon, Shlomo Sternberg, and Jonathan Weitsman,
\emph{Exact Euler-Maclaurin formulas for simple lattice polytopes}, 
Adv.\ Appl.\ Math.\ \textbf{39} (2007), no.~1, 1--50.

\bibitem{khovanskii-pukhlikov}
Askold G.\ Khovanskii and Aleksandr V.\ Pukhlikov,
\emph{A Riemann-Roch theorem for integrals and sums of quasipolynomials
over virtual polytopes}, Algebra and Analysis \textbf{4} (1992), 188--216.
Translation: St.\ Petersburg Math.\ J.\ \textbf{4} (1993), no.~4, 789--812.



\bibitem{oda}
Tadao Oda, \emph{Torus embeddings and applications
(based on joint work with Katsuya Miyake)}.
Tata Institute of Fundamental Research Lectures on Mathematics and Physics
\textbf{57}, Springer-Verlag, New-York, 1978.


\bibitem{pick}
Georg Pick, \emph{Geometrisches zur Zahlenlehre}.
{Sitzungsberichte des deutschen naturwissenschaftlich--medicinischen 
Vereines f\"{u}r B\"{o}hmen ``Lotos'' in Prag.}
(Neue Folge). \textbf{19} (1899), pp.~311--319.

\bibitem{vaughan}
Jennifer Vaughan, 
\emph{Equivariant metaplectic-c prequantization of symplectic manifolds
with Hamiltonian torus actions}, arXiv:1702.01476 [math.SG].

\end{thebibliography}
\end{document}